\tikzset{>=stealth}
\newtheorem{remark}{Remark}[section]
\newtheorem{theorem}{Theorem}[section]
\newtheorem{lemma}[theorem]{Lemma}
\newtheorem{assumption}{Assumption}
\numberwithin{equation}{section}
\def\qed{\hfill$\Box$\vspace{8pt}}
\begin{document}
\title{More than one Author with different Affiliations}
\author[1]{Qigang Liang}
\author[1,2]{Xuejun Xu}
\author[3]{Shangyou Zhang}

\affil[1]{\small School of Mathematical Science, Tongji University, Shanghai 200092, China, qigang$\_$liang@tongji.edu.cn}
\affil[2]{\small Institute of Computational Mathematics, Academy of Mathematics and Systems Science, Chinese Academy of Sciences, Beijing 100190, China, xxj@lsec.cc.ac.cn}
\affil[3]{\small Department of Mathematical Sciences, University of Delaware, Newark, DE 19716, USA, szhang@udel.edu}
\title{On a Sharp Estimate of Overlapping Schwarz Methods in $\bm{H}(\bm{{\rm curl}};\Omega)$ and $\bm{H}({\rm div};\Omega)$ }\date{}
\maketitle

{\bf{Abstract}:}\ \ The previous proved-bound is $C(1+\frac{H^2}{\delta^2})$ 
   for the condition number of the 
   overlapping domain decomposition  $\bm{H}(\bm{{\rm curl}};\Omega)$ 
   and $\bm{H}({\rm div};\Omega)$ methods, where $H$ and $\delta$ are the sizes of
   subdomains and overlaps respectively. 
 But all numerical results indicate that the best bound is $C(1+\frac{H}{\delta})$.
  In this work, we solve this long-standing open problem by proving that
  $C(1+\frac{H}{\delta})$ is indeed the best bound.

{\bf{Keywords}:}\ \ Maxwell equations, H-curl elements, H-div elements, Helmholtz decomposition, overlapping domain decomposition. \hspace*{2pt}

\section{Introduction}

\par Overlapping Schwarz method is one of the most important methods for computing the large-scale discrete problems arising from partial differential equations (PDE). This domain decomposition (DD) method is essentially parallel and has been extensively studied in the literature (see, e.g., \cite{MR1273155,MR1771050,MR1348039,MR1376107,MR1794350,MR3647952,MR3985469,MR1838270,MR4379971} and the references therein). Generally speaking, the iterative convergence rate (e.g., PCG, preconditioned GMRES) depends on the condition number of the discrete system.  Therefore, it is very important to obtain the sharp estimate of the preconditioned algebraic systems resulting from overlapping Schwarz methods.

\par For a long time, the best bound of the condition number of the overlapping domain decomposition method is  $C\left(1+ \frac{H^{2}}{\delta^{2}} \right)$ for the second order elliptic boundary value problems, cf. \cite{MRDryjaWidlund,MR2104179}, where $\delta$ is the overlapping size and $H$ is the diameter of subdomains. In 1994, Dryja and Widlund \cite{MR1273155} first improved the bound to $C\left(1+ \frac{H}{\delta} \right)$. Subsequently, Brenner \cite{MR1771050} proved the best bound is $C\left(1+\frac{H}{\delta}\right)$. The same techniques used are applied to the fourth order elliptic boundary value problems and high-frequency Helmholtz problems (see \cite{MR1771050,MR1348039,MR1376107,MR3647952}).

\par The same situation happens to the analysis of the two-level overlapping domain decomposition method in $\bm{H}(\bm{{\rm curl}};\Omega)$.
Toselli \cite{MR1794350} proved an upper bound of  $C\left(1+\frac{H^{2}}{\delta^{2}}\right)$ for the condition number. For many years, people wonder if the best bound should be $C\left(1+\frac{H}{\delta}\right)$. Numerical results \cite{MR1794350} indicate that the best bound should be $C\left(1+\frac{H}{\delta}\right)$. Bonazzoli et al. \cite{MR3985469} posed this open problem if the best bound is $C\left(1+ \frac{H}{\delta} \right)$. In this paper, we close this open problem and prove that $C\left(1+ \frac{H}{\delta} \right)$ is the best bound. 

\par The key ideas in obtaining the sharp estimate of overlapping Schwarz methods in $\bm{H}(\bm{{\rm curl}};\Omega)$ and $\bm{H}({\rm div};\Omega)$ are as follows.  Firstly, the functions are limited to one element of the coarse triangulation where all functions are $\bm{H}^1$ locally.  This way get sharper estimates than the results in \cite{MR1794350}. Secondly, by the Helmholtz decomposition, we get a solenoidal subspace in $\bm{H}(\bm{{\rm curl}};\Omega)$ and an irrotational subspace in $\bm{H}({\rm div};\Omega)$ which is also in $\bm{H}^{1}(\Omega)$ when $\Omega$ is convex.  After the decomposition we can utilize the techniques from the domain decomposition method for $H^1$ problems.

\par The rest of this paper is organized as follows: In section 2, we introduce model problems and some preliminaries. We introduce the overlapping Schwarz method and give a stable spacial decomposition for $\bm{H}(\bm{{\rm curl}};\Omega)$-elliptic problems in Section 3. An extension to $\bm{H}({\rm div};\Omega)$-elliptic problems is introduced in Section 4. Finally, we present a conclusion in Section 5.

\section{Model problems and preliminaries}
\par Throughout this paper, we use standard notations for Sobolev spaces $H^{m}(D)$ and $H_{0}^{m}(D)$ with their associated norms $\|\cdot\|_{m,D}$ and semi-norms $|\cdot|_{m,D}$. We denote by $L^{2}(D):=H^{0}(D)$, and use $(\cdot,\cdot)_{0,D}$ to represents the $L^{2}$-inner product and $||\cdot||_{0,D}$ represents the corresponding $L^{2}$-norm. For vector field space, we use bold font $\bm{L}^{2}(D)$ and $\bm{H}^{m}(D)$ to represent $[L^{2}(D)]^{d}$ and $[H^{m}(D)]^{d}$, respectively ($d=2,3$), and still use the notations of the norms $\|\cdot\|_{m,D}$, $|\cdot|_{m,D}$ and $||\cdot||_{0,D}$ with its inner product $(\cdot,\cdot)_{0,D}$. If $D=\Omega$, we drop the subscript $\Omega$ in associated norms or semi-norms or inner products. Let
\begin{align*}
\bm{H}(\bm{{\rm curl}};\Omega):=\{\bm{u}\in \bm{L}^{2}(\Omega)\ |\ \bm{{\rm curl}}\bm{u}\in \bm{L}^{2}(\Omega)\ \}
\end{align*}
equipped with the norm $||\cdot||^{2}_{\bm{{\rm curl}}}=||\cdot||_{0}^{2}+||\bm{{\rm curl}}\cdot||_{0}^{2}$. $\bm{H}_{0}(\bm{{\rm curl}};\Omega)$ represents a subspace where the vector valued functions have a vanishing tangential trace at domain boundary. $\bm{H}(\bm{{\rm curl}}_{0};\Omega)$ represents another supspace where the vector valued functions have a vanishing $\bm{{\rm curl}}$.  We also let
\begin{align*}
\bm{H}({\rm div};\Omega):=\{\bm{u}\in \bm{L}^{2}(\Omega)\ |\ {\rm div}\bm{u}\in \bm{L}^{2}(\Omega)\  \}
\end{align*}
equipped with the norm $||\cdot||^{2}_{{\rm div}}=||\cdot||_{0}^{2}+||{\rm div}\cdot||_{0}^{2}$.  Let $\bm{H}({\rm div}_{0};\Omega)$ be a subspace where the vector valued functions are divergence-free. For convenience of notations in this paper, we define 
\begin{align}\label{H0p}
\bm{H}^{\perp}_{0}(\bm{{\rm curl}};\Omega):=\bm{H}_{0}(\bm{{\rm curl}};\Omega)\cap \bm{H}({\rm div}_{0};\Omega).
\end{align}
\par In this paper, we assume $\Omega$ is convex, bounded and simple connected. Then we know that the kernel of $\bm{{\rm curl}}$ operator in $\bm{H}_{0}(\bm{{\rm curl}};\Omega)$ is $\nabla{H_{0}^{1}(\Omega)}$. Moreover, the Helmholtz decomposition holds
\begin{align*}
\bm{H}_{0}(\bm{{\rm curl}};\Omega)=\nabla{H_{0}^{1}(\Omega)}\oplus \bm{H}^{\perp}_{0}(\bm{{\rm curl}};\Omega),
\end{align*}
where $\oplus$ is an orthogonal decomposition under $\bm L^2(\Omega)$.
For the theoretical analysis in the following, we define an operator (called as Hodge operator in \cite{MR2009375}) for the above decomposition,
\begin{align} \label{T-p} \begin{aligned}
    &\Theta^{\perp}  \  : \  \bm{H}_{0}(\bm{{\rm curl}};\Omega)\to \bm{H}^{\perp}_{0}(\bm{{\rm curl}};\Omega), \\
    &\Theta^{\perp} (\bm w) = \Theta^{\perp} (\nabla s+ \bm w^\perp ) = \bm w^\perp, 
   \end{aligned}
\end{align} where $s\in H_0^1(\Omega)$ and $\bm w^\perp \in \bm{H}^{\perp}_{0}(\bm{{\rm curl}};\Omega)$.  Therefore, $\Theta^\perp$ is ${\bf curl}$-preserving that
\begin{align} \label{T-preserving}  
   {\bf curl} \Theta^{\perp} \bm w &={\bf curl} \bm w .
\end{align} 
Using the Sobolev embedding theorem (see, e.g., \cite{MR1626990,MR2009375,MR851383}), we known that
\begin{align}\label{Align_Embedding}
\bm{H}^{\perp}_{0}(\bm{{\rm curl}};\Omega)=\bm{H}_{0}(\bm{{\rm curl}};\Omega)\cap \bm{H}({\rm div}_{0};\Omega)\hookrightarrow \bm{H}^{1}(\Omega).
\end{align}

\par Consider the model problem 
\begin{equation}\label{Equation_curlcurl}
     \begin{cases}
        \bm{{\rm curl}}\;\bm{{\rm curl}}\bm{u}+\bm{u}=\bm{f} \ \ &\text{in $\Omega,$}\\
         \ \ \ \ \ \bm{n}\times\bm{u}=\bm{0}\ \ &\text{on $\partial\Omega,$}
     \end{cases}
\end{equation}
where  $\bm{n}$ represents the outward unit normal vector on $\partial{\Omega}$. The variational form for \eqref{Equation_curlcurl} is as follows: 
\begin{equation}\label{Equation_curlcurl_variation}
    \begin{cases}
    \text{Given $\bm{f} \in \bm{L}^{2}(\Omega)$, find $\bm{u}\in \bm{H}_{0}(\bm{{\rm curl}};\Omega)$ such that}\\
    \text{$a_{\bm{{\rm curl}}}(\bm{u},\bm{v})=(\bm{f},\bm{v})_{0}\ \ \ \ \forall\ \bm{v}\in \bm{H}_{0}(\bm{{\rm curl}};\Omega)$},
    \end{cases}
\end{equation}
where 
\begin{equation}\label{a-curl}
  a_{\bm{{\rm curl}}}(\bm{w},\bm{v}):=\int_{\Omega}\big(\bm{{\rm curl}}\bm{w}\cdot\bm{{\rm curl}}\bm{v}+\bm{w}\cdot\bm{v}\big)dx
\end{equation}
for all $\bm{w},\bm{v}\in \bm{H}(\bm{{\rm curl}};\Omega)$. By the Lax-Milgram theorem, it is easy to see that the solutions with respect to \eqref{Equation_curlcurl_variation} are well-posed.

\par Let $\mathcal{T}_{h}$ be a shape-regular and quasi-uniform triangulation. We  consider the $k$-th N\'ed\'elec element in $\bm{H}_{0}(\bm{{\rm curl}};\Omega)$ as follows
\begin{equation}\label{ND-h}
\mathcal{ND}_{h,0}:=\{\bm{v}\in \bm{H}_{0}(\bm{{\rm curl}};\Omega)\ |\ \bm{v}|_{\tau}\in \mathcal{ND}_k (\tau),\ \forall\ \tau \in \mathcal{T}_{h}\ \},
\end{equation}
where $\mathcal{ND}_k(\tau)$ is the $k$-th order local N\'ed\'elec space on element $\tau$ (see, e.g., \cite{MR592160,MR864305,MR2009375}). 
The discrete variational form of \eqref{Equation_curlcurl_variation} may be written as:
\begin{equation}
             \begin{cases}\label{Equation_curlcurl_discrete_variation}
             \text{Given $\bm{f}\in \bm{L}^{2}(\Omega)$, find $\bm{u}_{h}\in\mathcal{ND}_{h,0}$ such that }\\
             a_{\bm{{\rm curl}}}(\bm{u}_{h},\bm{v}_{h})= (\bm{f},\bm{v}_{h})_{0}\ \ \ \ \forall\ \bm{v}_{h}\in \mathcal{ND}_{h,0}.
             \end{cases}
\end{equation}
Define an operator $A_{h}:\mathcal{ND}_{h,0}\to \mathcal{ND}_{h,0}$ such that $(A_{h}\bm{w}_{h},\bm{v}_{h})=a_{\bm{{\rm curl}}}(\bm{w}_{h},\bm{v}_{h})$ for all $\bm{w}_{h},\bm{v}_{h}\in \mathcal{ND}_{h,0}.$ We denote the discrete divergence-free space by 
\begin{align}\label{ND-p} 
\mathcal{ND}_{h,0}^{\perp}=\{\bm{v}_{h}\in \mathcal{ND}_{h,0}\ |\ (\bm{v}_{h},\nabla{p}_{h})_{0}=0,\ \ \forall\ p_{h}\in S_{h,0}\ \},
\end{align}
with $S_{h,0}$ being a continuous and piecewise $P_{k+1}$ polynomial space on $\mathcal{T}_h$ with vanishing trace on $\partial{\Omega}$. Therefore, we have the discrete Helmholtz decomposition 
\begin{align} \label{D-decompose}
\mathcal{ND}_{h,0}=\nabla{S_{h,0}}\oplus \mathcal{ND}_{h,0}^{\perp}.
\end{align}
It is easy to see that $\mathcal{ND}_{h,0}^{\perp}\not\subset
  \bm{H}^{\perp}_{0}(\bm{{\rm curl}};\Omega)$, where
    $ \bm{H}^{\perp}_{0}(\bm{{\rm curl}};\Omega)$
is defined in \eqref{H0p}. 
Let a subspace be
\begin{align*} \bm{V}^{+}:=\Theta^{\perp}\mathcal{ND}_{h,0}^{\perp}\subset \bm{H}^{\perp}_{0}(\bm{{\rm curl}};\Omega),
\end{align*} where $\Theta^\perp$ is defined in \eqref{T-p}.   Define another operator 
\begin{align*}
P_{h}:\bm{H}^{\perp}_{0}(\bm{{\rm curl}};\Omega)\to \bm{V}^{+} 
\end{align*} such that
\begin{align}\label{P-h}
(\bm{{\rm curl}}P_{h}\bm{w},\bm{{\rm curl}}\bm{v})_{0}=(\bm{{\rm curl}}\bm{w},\bm{{\rm curl}}\bm{v})_{0}\ \ \ \ \forall\ \bm{w}\in \bm{H}^{\perp}_{0}(\bm{{\rm curl}};\Omega),\ \bm{v}\in \bm{V}^{+}.
\end{align}
Due to the fact that the Poincar\'e inequality holds in $\bm{H}^{\perp}_{0}(\bm{{\rm curl}};\Omega)$, we know that the operator $P_{h}$ is well-defined in \eqref{P-h}. Further, we extend the operator $P_{h}$ to $\bm{H}_{0}(\bm{{\rm curl}};\Omega)$ by
\begin{align} \label{P-h-1} 
      \begin{aligned}
        &P_{h} \nabla  s  :=0, \quad s\in H^1_0(\Omega), \\
        &P_{h}\bm w =P_h (\nabla  s+ \bm w^\perp) = P_h \bm w^\perp, \quad  \ \bm w^\perp \in \bm{H}_{0}^\perp(\bm{{\rm curl}};\Omega).
     \end{aligned}
\end{align} 
It holds that for any $\bm{w}_{h}^{\perp}\in \mathcal{ND}_{h,0}^{\perp}$, we have
\begin{align}\label{P-T}
P_{h}\bm{w}_{h}^{\perp}&= P_h(\nabla s+  \Theta^{\perp} \bm{w}_{h}^{\perp})
       =   P_h \Theta^{\perp} \bm{w}_{h}^{\perp} =  \Theta^{\perp} \bm{w}_{h}^{\perp}
\end{align} for some $s\in H^1_0(\Omega)$.
By \eqref{T-preserving}, we have 
\begin{align}\label{P-e}
   \bm{{\rm curl}} \; P_{h}\bm{w}_{h}^{\perp} = \bm{{\rm curl}} \;   \Theta^{\perp} \bm{w}_h^\perp &=    \bm{{\rm curl}} \;\bm{w}_h^\perp .
\end{align}
\par The following lemma holds (see Lemma 10.6 in \cite{MR2104179}). 
\begin{lemma}\label{Lemma_Ph}
Let $\Omega$ be convex. Then the following error estimate holds,
\begin{equation}\notag
||\bm{u}_{h}^{\perp}-P_{h}\bm{u}_{h}^{\perp}||_{0}\leq Ch||\bm{{\rm curl}}\bm{u}^{\perp}_{h}||_{0}\ \ \ \ \forall\ \bm{u}^{\perp}_{h}\in \mathcal{ND}_{h,0}^{\perp},
\end{equation}
with $C$ independent of $h$ and $\bm{u}^{\perp}_{h}$,
where $P_h$ is defined in \eqref{P-h},\ \eqref{P-h-1} and $\mathcal{ND}_{h,0}^{\perp}$
   is defined in \eqref{ND-p}.
\end{lemma}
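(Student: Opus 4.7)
The plan is to identify $P_h\bm{u}_h^\perp$ with $\Theta^\perp\bm{u}_h^\perp$ (via \eqref{P-T}), which places it in $\bm{H}^\perp_0(\bm{{\rm curl}};\Omega)\hookrightarrow \bm{H}^1(\Omega)$ by convexity and \eqref{Align_Embedding}. Writing $\bm{v}:=P_h\bm{u}_h^\perp$, the target becomes $\|\bm{u}_h^\perp-\bm{v}\|_0\leq Ch\,\|\bm{{\rm curl}}\bm{u}_h^\perp\|_0$. Since $\bm{v}\in\bm{H}^1(\Omega)$, I can legitimately form the N\'ed\'elec interpolant $\bm{v}_h:=\Pi_h\bm{v}\in\mathcal{ND}_{h,0}$ and invoke the standard error bound $\|\bm{v}-\bm{v}_h\|_0\leq Ch\,\|\bm{v}\|_1$. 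The embedding \eqref{Align_Embedding} combined with the Poincar\'e inequality on $\bm{H}^\perp_0(\bm{{\rm curl}};\Omega)$ (which is available because elements of that space are divergence-free with vanishing tangential trace) converts this to $\|\bm{v}-\bm{v}_h\|_0\leq Ch\,\|\bm{{\rm curl}}\bm{v}\|_0=Ch\,\|\bm{{\rm curl}}\bm{u}_h^\perp\|_0$ by \eqref{P-e}.

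Next I would exploit the N\'ed\'elec commuting diagram $\bm{{\rm curl}}\,\Pi_h=\Pi_h^{{\rm div}}\bm{{\rm curl}}$ between $\mathcal{ND}_{h,0}$ and the matching $\bm{H}({\rm div})$-conforming space. Because $\bm{{\rm curl}}\bm{v}=\bm{{\rm curl}}\bm{u}_h^\perp$ already lies in that finite element space, $\Pi_h^{{\rm div}}$ acts as the identity on it, giving $\bm{{\rm curl}}\bm{v}_h=\bm{{\rm curl}}\bm{u}_h^\perp$. Thus $\bm{u}_h^\perp-\bm{v}_h\in\mathcal{ND}_{h,0}$ is curl-free, and the discrete exact sequence (which underlies \eqref{D-decompose}) yields $\bm{u}_h^\perp-\bm{v}_h=\nabla\phi_h$ for some $\phi_h\in S_{h,0}$.

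The final step is a standard duality-type manipulation: split
\begin{equation*}
\|\bm{u}_h^\perp-\bm{v}\|_0^2=(\bm{u}_h^\perp-\bm{v},\,\nabla\phi_h)_0+(\bm{u}_h^\perp-\bm{v},\,\bm{v}_h-\bm{v})_0 .
\end{equation*}
The first bracket vanishes because $(\bm{u}_h^\perp,\nabla\phi_h)_0=0$ by the definition \eqref{ND-p} of $\mathcal{ND}_{h,0}^\perp$, while $(\bm{v},\nabla\phi_h)_0=0$ by integration by parts using ${\rm div}\,\bm{v}=0$ and $\phi_h\in H_0^1(\Omega)$. Applying Cauchy--Schwarz to the remaining term and canceling a factor of $\|\bm{u}_h^\perp-\bm{v}\|_0$ gives $\|\bm{u}_h^\perp-\bm{v}\|_0\leq\|\bm{v}-\bm{v}_h\|_0\leq Ch\,\|\bm{{\rm curl}}\bm{u}_h^\perp\|_0$. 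The most delicate point is verifying that $\bm{v}$ has enough regularity to make $\Pi_h\bm{v}$ meaningful and to support the $O(h)$ interpolation bound; this is precisely where convexity of $\Omega$ enters, through \eqref{Align_Embedding}, turning the continuous Helmholtz component into a genuine $\bm{H}^1$-function rather than merely a space with fractional regularity.
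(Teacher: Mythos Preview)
The paper does not give its own proof of this lemma; it simply cites Lemma~10.6 in Toselli--Widlund. Your outline is exactly the argument found there (and in Hiptmair's 1999 \emph{SINUM} paper on multigrid for Maxwell): set $\bm v=\Theta^\perp\bm u_h^\perp$, interpolate into $\mathcal{ND}_{h,0}$, use the commuting diagram to show $\bm u_h^\perp-\bm v_h\in\nabla S_{h,0}$, and finish via the double orthogonality $(\bm u_h^\perp-\bm v,\nabla\phi_h)=0$. So the route matches the reference.

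There is one genuine imprecision you should repair. In three dimensions the canonical N\'ed\'elec interpolant is \emph{not} bounded on $\bm H^1(\Omega)$: functions in $H^1$ of a 3D element have no trace on one--dimensional edges, so the edge moments need not make sense, and the blanket estimate $\|\bm v-\Pi_h\bm v\|_0\le Ch\,\|\bm v\|_1$ is \emph{not} a ``standard error bound'' for arbitrary $\bm v\in\bm H^1$. What actually rescues the argument is precisely the extra information you invoke one line later for the commuting diagram: $\bm{{\rm curl}}\,\bm v=\bm{{\rm curl}}\,\bm u_h^\perp$ is a polynomial on each element $\tau$. Locally this forces $\bm v|_\tau$ to be a polynomial plus the gradient of an $H^2(\tau)\hookrightarrow C^0(\bar\tau)$ function, so the edge degrees of freedom are well defined and one obtains the elementwise bound $\|\bm v-\Pi_h\bm v\|_{0,\tau}\le Ch_\tau|\bm v|_{1,\tau}$ on the subspace $\{\bm w\in\bm H^1(\tau):\bm{{\rm curl}}\,\bm w\in\bm P_k(\tau)\}$; summing over $\tau$ gives the global $O(h)$ estimate you need. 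Convexity by itself only delivers $\bm v\in\bm H^1$, which is insufficient in 3D; you need convexity \emph{and} the piecewise-polynomial curl to close the interpolation step. Once you add this qualification, the proof is complete and coincides with the cited reference.
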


\section{Overlapping Schwarz methods in $\bm{H}(\bm{{\rm curl}};\Omega)$}

\par Let $\mathcal{T}_{H}:=\{K_{i}\}_{i=1}^{N}$ be a shape-regular and quasi-uniform coarse triangular or tetrahedral mesh on $\Omega$, where $H:=\max\{H_{i}\ |\ i= 1,2,...,N\}$.
We let domain be subdivided into $N$ subdomains where
\begin{align*} \Omega_i = K_i, \quad i=1, \dots, N. 
\end{align*} The fine shape-regular and quasi-uniform triangulation $\{\tau\}$ is obtained by subdividing $\mathcal{T}_{H}$ and we denote it by $\mathcal{T}_{h}=\{\tau\}$. We may construct the edge element spaces $\mathcal{ND}_{H,0}\subset \mathcal{ND}_{h,0}$ on $\mathcal{T}_{H}$ and $\mathcal{T}_{h}$ but it is well-known that $\mathcal{ND}_{h,0}^{\perp}\not\subset \mathcal{ND}_{h,0}^{\perp}$. To get overlapping subdomains $(\Omega_{i}^{'},\ 1\leq i\leq N)$, we enlarge a subdomain $\Omega_{i}$ by adding a size-$\delta$ layer of fine elements, where $\delta =O({\rm dist}(\partial\Omega_{i}\setminus\partial\Omega,\partial\Omega_{i}^{'}\setminus\partial\Omega))$.
We define, see the gray region in Figure \ref{Figure_triangular_mesh} in 2D (The 3D case is same), 
   the overlapping region inside $\Omega_i'$ by
\begin{align} \label{domains} \begin{aligned} 
\Omega_{i,j,\delta}&=\bigcup_{\tau\in \mathcal{T}_h,\ \tau\subset (\Omega_i'\cap\Omega_j') } \tau,\\
\Omega_{i,\delta }:&=\bigcup_{\Omega_j'\cap \Omega_i' \ne \emptyset}
    \Omega_{i,j,\delta}. \end{aligned}
\end{align}

\begin{figure}[H]
\centering
\begin{tikzpicture}[scale=1]
\draw [step=2] (0,0) grid (6,6);
\draw [black] (0,6) -- (6,0);
\draw [black] (0,4) -- (4,0);
\draw [black] (0,2) -- (2,0);
\draw [black] (2,6) -- (6,2);
\draw [black] (4,6) -- (6,4);

\pgfmathsetmacro{\mysqrt}{sqrt(2)}
\filldraw [gray!20] (1.7,2) -- (1.7,4.3) -- (2,4+0.3*\mysqrt) -- (4+0.3*\mysqrt, 2) -- (4.3,1.7) -- (2,1.7) -- (1.7,2);
\filldraw [gray!20,thick] (2,2) -- (2,4) -- (4,2) -- (2,2);
\filldraw [white,thick] (2.3,2.3) -- (2.3,3.7-0.3*\mysqrt) -- (3.7-0.3*\mysqrt,2.3) -- (2.3,2.3);

\draw [black,thick] (2,2) -- (2,4) -- (4,2) -- (2,2);
\draw [red,thick,dashed] (1.7,2) -- (1.7,4.3) -- (2,4+0.3*\mysqrt) -- (4+0.3*\mysqrt, 2) -- (4.3,1.7) -- (2,1.7) -- (1.7,2);

\draw [red,thick,dashed] (2.3,2.3) -- (2.3,3.7-0.3*\mysqrt) -- (3.7-0.3*\mysqrt,2.3) -- (2.3,2.3);

\draw [red,thick,dashed,->] (6.3,2.5) to [in = -5, out = 200] (4.2,2.3);
\node at (6.3,2.5) [right] {\small $\Omega_i'$};

\draw [thick,->] (6.3,3.5) to [in = -5, out = 200] (3.35,2.7);
\node at (6.3,3.5) [right] {\small $\Omega_{i}$};

\draw [blue,thick,dashed,->] (6.3,4.4) to [in = -5, out = -200] (3.1,3.1);
\node at (6.3,4.4) [right] {\small $\Omega_{i,\delta}$};
\end{tikzpicture}
\caption{The diagrammatic presentation of $\Omega_{i}^{'},\Omega_{i},\Omega_{i,\delta}$ for triangular mesh}
\label{Figure_triangular_mesh}
\end{figure}
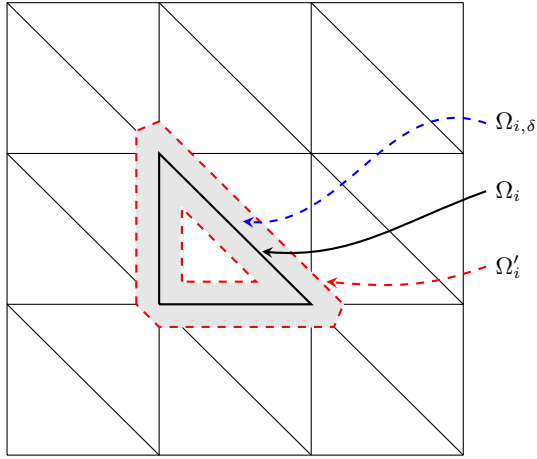

\par We decompose the finite element space $\mathcal{ND}_{h,0}$ in \eqref{ND-h} into overlapping subspaces,
\begin{align*}
   \bm{V}_{i}:=\mathcal{ND}_{h,0}\cap\bm{H}_{0}(\bm{\mbox{curl}};\Omega_{i}^{'}),\ \ \ \ i=1,2,...,N . 
\end{align*}
For describing the overlapping domain decomposition preconditioner for $A_{h}$, we define $A_{H}:\mathcal{ND}_{H,0}\to \mathcal{ND}_{H,0}$ such that
\begin{align*}  
(A_{H}\bm{w}_{H},\bm{v}_{H})_{0}=a_{\bm{{\rm curl}}}(\bm{w}_{H},\bm{v}_{H})\ \ \ \ \forall\ \bm{w}_{H},\bm{v}_{H}\in \mathcal{ND}_{H,0}.
\end{align*}
Similarly, we also define $A_{i}:\bm{V}_{i}\to \bm{V}_{i}$ such that
\begin{align*}
(A_{i}\bm{w}_{i},\bm{v}_{i})_{0}=a_{\bm{{\rm curl}}}(\bm{w}_{i},\bm{v}_{i})\ \ \ \ \forall\ \bm{w}_{i},\bm{v}_{i}\in \bm{V}_{i}.
\end{align*}
We denote by $Q_{H}:\bm{L}^{2}(\Omega)\to \mathcal{ND}_{H,0}$ a $\bm{L}^{2}$-orthogonal projector and $Q_{i}:\bm{L}^{2}(\Omega)\to \bm{V}_{i},\ (i=1,2,...,N)$ $\bm{L}^{2}$-orthogonal projectors. So the preconditioner is
\begin{align}\label{B-h-1}
B_{h}^{-1}=A_{H}^{-1}Q_{H}+\sum_{i=1}^{N}A_{i}^{-1}Q_{i}.
\end{align}

\par Next, we introduce an assumption on overlapping domain decomposition (see \cite{MR2104179}). 
\begin{assumption}\label{Assumption}
The partition $\{\Omega_{i}^{'}\}_{i=1}^{N}$ may be colored using at most $N_{0}$ colors, in such a way that subdomains with the same color are disjoint. The integer $N_{0}$ is independent of $N$.
\end{assumption}

\par According to Assumption \ref{Assumption}, we obtain a partition of unity, and then there exists a family continuous and piecewise linear polynomials
$\{\theta_{i}\}_{i=1}^{N}$, which satisfy the following properties
\begin{align}\label{theta}\begin{aligned}
&{\rm supp}(\theta_{i})\subset \overline{\Omega_{i}^{'}},\ \ \ \ 0\leq \theta_{i}\leq 1,\\
&\sum_{i=1}^{N}\theta_{i}\equiv 1,\ \ \ \ x\in \Omega,\\
&\big(\nabla{\theta_{i}}\big)|_{\Omega_{i}^{\circ}}=0,\ \ \ \ ||\nabla{\theta_{i}}||_{0,\infty,\Omega_{i,\delta }}\leq \frac{C}{\delta }, \end{aligned}
\end{align}
where $\Omega_{i}^{\circ}=\Omega_{i}^{'}\backslash\overline{\Omega_{i,\delta}}$.

\par Next, we first present the main result in this paper, and delay its proof. 

\begin{theorem}\label{Theorem_Mainresult}
Let Assumption \ref{Assumption} hold. Then for any $\bm{v}_{h}\in \mathcal{ND}_{h,0}$, we have
\begin{align}\label{Align_Mainresult}
\frac{1}{C_{1}(1+ {H}/{\delta})} a_{\bm{{\rm curl}}}(\bm{v}_{h},\bm{v}_{h})\leq a_{\bm{{\rm curl}}}(B_{h}^{-1}A_{h}\bm{v}_{h},\bm{v}_{h})\leq C_{2} a_{\bm{{\rm curl}}}(\bm{v}_{h},\bm{v}_{h}),
\end{align}
with the constants $C_{1}$ and $C_{2}$ independent of $h,\ H,\ \delta$ and $\bm{v}_{h}$, but not $N_{0}$, where $B_h^{-1}$ is defined in \eqref{B-h-1} and $N_0$ is defined in 
   Assumption \ref{Assumption}.
\end{theorem}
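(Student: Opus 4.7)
The plan is the standard abstract Schwarz framework. The upper bound in \eqref{Align_Mainresult} follows at once from Assumption \ref{Assumption} by a coloring/Cauchy--Schwarz argument, independent of $H/\delta$. The difficulty lies entirely in producing, for every $\bm v_h\in \mathcal{ND}_{h,0}$, a stable splitting $\bm v_h = \bm v_H + \sum_{i=1}^N \bm v_i$ with $\bm v_H\in \mathcal{ND}_{H,0}$, $\bm v_i\in \bm V_i$, satisfying
\begin{equation*}
a_{\bm{{\rm curl}}}(\bm v_H,\bm v_H) + \sum_{i=1}^N a_{\bm{{\rm curl}}}(\bm v_i,\bm v_i) \leq C_1(1+H/\delta)\, a_{\bm{{\rm curl}}}(\bm v_h,\bm v_h).
\end{equation*}

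I would build the splitting from the discrete Helmholtz decomposition \eqref{D-decompose}: write $\bm v_h = \nabla s_h + \bm v_h^\perp$ with $s_h\in S_{h,0}$ and $\bm v_h^\perp\in \mathcal{ND}_{h,0}^\perp$, and handle the two parts separately since $\bm{{\rm curl}}\,\nabla s_h = 0$. For the gradient part $\nabla s_h$ I would invoke the sharp Brenner/Dryja--Widlund $(1+H/\delta)$-stable decomposition of $s_h$ in the $H^1$-conforming space $S_{h,0}$ and then take gradients; the pieces land in the gradient subspaces of $\mathcal{ND}_{H,0}$ and of each $\bm V_i$, and the curl contribution simply vanishes. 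For the solenoidal part I would push $\bm v_h^\perp$ through the Hodge operator to $\bm w := \Theta^\perp \bm v_h^\perp = P_h \bm v_h^\perp \in \bm V^+ \subset \bm H^1(\Omega)$, which by \eqref{P-e} is curl-preserving and by Lemma \ref{Lemma_Ph} satisfies $\|\bm v_h^\perp - \bm w\|_0 \leq C h\,\|\bm{{\rm curl}}\,\bm v_h^\perp\|_0$.

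Since $\bm w$ is now $\bm H^1$-regular, I would apply the partition of unity $\{\theta_i\}$ from \eqref{theta} to $\bm w$ and interpolate back into $\mathcal{ND}_{h,0}$ using a N\'ed\'elec quasi-interpolant $\Pi_h^{\rm ND}$. The coarse component is taken to be a coarse N\'ed\'elec interpolant $\bm v_H := \Pi_H^{\rm ND} \bm w \in \mathcal{ND}_{H,0}$, and the local ones $\bm v_i := \Pi_h^{\rm ND}\bigl(\theta_i (\bm w - \Pi_H^{\rm ND} \bm w)\bigr) \in \bm V_i$, supported in $\Omega_i'$. A product-rule computation then splits $\|\bm v_i\|_{\bm{{\rm curl}}, \Omega_i'}^2$ into a contribution on the interior $\Omega_i^\circ$, where $\nabla\theta_i = 0$ and the estimates are trivial, and a contribution on the overlap strip $\Omega_{i,\delta}$, where $|\nabla\theta_i|\le C/\delta$; the Lemma \ref{Lemma_Ph} error is absorbed because one may take $h\le\delta$.

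The main obstacle, and precisely where the sharp factor $1+H/\delta$ must replace Toselli's $1+H^2/\delta^2$, is the estimate of the cross term
\begin{equation*}
\frac{1}{\delta^2}\,\|\bm w - \Pi_H^{\rm ND}\bm w\|_{0,\Omega_{i,\delta}}^2.
\end{equation*}
The crucial improvement, as foreshadowed in the introduction, is to localise the analysis to a single coarse element $K_j\in \mathcal{T}_H$, where $\bm w\in \bm H^1(K_j)$ and the coarse interpolant absorbs the elementwise average through a Friedrichs inequality on $K_j$. One then needs a thin-strip inequality of the form
\begin{equation*}
\|\bm w - \Pi_H^{\rm ND}\bm w\|_{0, \Omega_{i,\delta}\cap K_j}^2 \le C\,\delta H\,\|\nabla\bm w\|_{0, K_j}^2,
\end{equation*}
which yields $\delta/H$ rather than the naive $\delta^2/H^2$; multiplying by $1/\delta^2$ produces exactly the desired $H/\delta$ factor. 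Establishing this strip estimate uniformly in $K_j$, and stitching the element-wise bounds back into a globally defined coarse component inside $\mathcal{ND}_{H,0}$ (with all compatibility conditions at inter-element coarse faces respected), is the hardest technical point of the argument.
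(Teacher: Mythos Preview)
Your overall architecture is right and matches the paper: abstract Schwarz framework, discrete Helmholtz splitting, the $H^1$ result for the gradient part, and the Hodge lift $P_h\bm v_h^\perp\in\bm H^1(\Omega)$ as the vehicle for the sharp strip estimate on the solenoidal part. But the splitting you actually write down is not a decomposition of $\bm v_h^\perp$. With $\bm v_H=\Pi_H^{\rm ND}\bm w$ and $\bm v_i=\Pi_h^{\rm ND}(\theta_i(\bm w-\Pi_H^{\rm ND}\bm w))$ you get
\[
\bm v_H+\sum_{i=1}^N\bm v_i=\Pi_h^{\rm ND}\bm w\neq \bm v_h^\perp,
\]
and the remainder $\bm v_h^\perp-\Pi_h^{\rm ND}\bm w$ has global support, so it cannot be pushed into any single $\bm V_i$. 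Your remark that ``the Lemma~\ref{Lemma_Ph} error is absorbed because $h\le\delta$'' does not help here: Lemma~\ref{Lemma_Ph} controls a norm, not a function, and the abstract lower bound needs an \emph{exact} splitting.

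The paper repairs precisely this point. The local pieces are defined with the discrete function itself,
\[
\bm w_i=\Pi_{E,h}\bigl(\theta_i(\bm w_h^\perp-\bm w_0)\bigr),
\]
so that $\bm w_0+\sum_i\bm w_i=\bm w_h^\perp$ identically. The $H^1$ lift enters only in the \emph{analysis}: one writes $\bm w_h^\perp-\bm w_0=(\bm w_h^\perp-P_h\bm w_h^\perp)+(P_h\bm w_h^\perp-\bm w_0)$, bounds the first bracket on the strip by Lemma~\ref{Lemma_Ph} (giving the harmless $h^2/\delta^2$ term), and applies the strip estimate (the paper's Lemma~\ref{Lemma_Inequality_of_InnerStri}, stated for \emph{piecewise} $\bm H^1$ functions on the coarse mesh) to the second bracket. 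A second difference: the coarse component is $\bm w_0=Q_HP_h\bm w_h^\perp$ with $Q_H$ the $\bm L^2$ projection onto $\mathcal{ND}_{H,0}$, not a coarse N\'ed\'elec interpolant. This avoids the well-definedness issue of $\Pi_H^{\rm ND}$ on merely $\bm H^1$ functions in three dimensions and supplies directly the approximation and stability properties (Lemma~\ref{Lemma_QH}) that drive the $(1+H/\delta)$ bound; the needed local $H^1$ control of $Q_HP_h\bm w_h^\perp$ is obtained via an inverse estimate and the elementwise $\bm P_0$ projector $Q_{0,\Omega_i}$ in \eqref{Align_Q0i_Approximation}, rather than via coarse-face compatibility conditions.
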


\begin{remark}
By the Assumption \ref{Assumption},  the upper bound in \eqref{Align_Mainresult} is standard,
  cf. \cite{MR2104179}. We will prove the lower bound in \eqref{Align_Mainresult}.
\end{remark}

\par For convenience of theoretical analysis, using the ``local" argument (see \cite{MR1066830}), we may denote by $Q_{0,\Omega_{i}}:\bm{L}^{2}(\Omega_{i})\to \bm{P}_{0}(\Omega_{i}),\ \ \Omega_{i}\in \mathcal{T}_{H}$, a local $\bm{L}^{2}$-orthogonal projector.  
 We have
\begin{align}\label{Align_Q0i_Approximation}
||\bm{w}-Q_{0,\Omega_{i}}\bm{w}||_{0,\Omega_{i}}\leq CH|\bm{w}|_{1,\Omega_{i}} \ \ \ \ \forall\ \bm{w}\in \bm{H}^{1}(\Omega_{i}).
\end{align}

\par In order to prove the main result, we first give some technical lemmas. In the following theoretical analysis, we also take advantage of the global $\bm{L}^{2}$-orthogonal projector $Q_{H}$. The following lemma holds (see \cite{MR1794350,MR2104179}).

\begin{lemma}\label{Lemma_QH}
Let $\mathcal{T}_{H}$ be shape-regular and quasi-uniform. Then for $\bm{u}\in \bm{H}^{1}(\Omega)$, we have
\begin{align}\label{Q-h} 
    \begin{aligned}
||\bm{{\rm curl}}(Q_{H}\bm{u})||_{0}&\leq C|\bm{u}|_{1},\\
||\bm{u}-Q_{H}\bm{u}||_{0}&\leq CH|\bm{u}|_{1},
    \end{aligned}
\end{align}
with the constant $C$ independent of $\bm{u}$ and $H$, where $Q_H$ is the $\bm{L}^2$-orthogonal projection to $\mathcal{ND}_{H,0}$.
\end{lemma}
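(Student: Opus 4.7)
The plan is to reduce both bounds to standard properties of a quasi-interpolant into the coarse Nédélec space, since $Q_H$ itself does not commute with $\bm{{\rm curl}}$. Concretely, I would invoke a Clément/Schöberl--Zaglmayr type quasi-interpolation operator $\Pi_H : \bm{H}^1(\Omega) \cap \bm{H}_0({\bm{{\rm curl}}};\Omega) \to \mathcal{ND}_{H,0}$ that satisfies the local stability and approximation bounds
\begin{align*}
\|\bm{u}-\Pi_H \bm{u}\|_{0,K} &\leq C H\,|\bm{u}|_{1,\tilde K}, \\
\|\bm{{\rm curl}}\,\Pi_H \bm{u}\|_{0,K} &\leq C\,|\bm{u}|_{1,\tilde K},
\end{align*}
on every $K\in\mathcal{T}_H$, where $\tilde K$ is a fixed macro-patch around $K$, and summing over $K$ gives the global versions. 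The existence of such an operator with commuting-diagram/curl-preserving features is classical (Schöberl, Ern--Guermond, Arnold--Falk--Winther), and its properties will be quoted as a black box.

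The second inequality in \eqref{Q-h} is then immediate from the $\bm{L}^2$-best approximation property of $Q_H$: since $\Pi_H \bm{u} \in \mathcal{ND}_{H,0}$,
\begin{align*}
\|\bm{u}-Q_H \bm{u}\|_0 \;\leq\; \|\bm{u}-\Pi_H \bm{u}\|_0 \;\leq\; C H\,|\bm{u}|_1 .
\end{align*}

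For the first inequality, the key idea is to compare $Q_H \bm{u}$ with $\Pi_H \bm{u}$ in the discrete space and exploit an inverse inequality. Writing
\begin{align*}
\|\bm{{\rm curl}}(Q_H\bm u)\|_0 \leq \|\bm{{\rm curl}}(Q_H\bm u - \Pi_H\bm u)\|_0 + \|\bm{{\rm curl}}(\Pi_H\bm u)\|_0 ,
\end{align*}
the second term is controlled by $C|\bm{u}|_1$ directly by the curl-stability of $\Pi_H$. For the first term, since $Q_H \bm u - \Pi_H \bm u \in \mathcal{ND}_{H,0}$ on a shape-regular, quasi-uniform mesh, a standard inverse inequality yields
\begin{align*}
\|\bm{{\rm curl}}(Q_H \bm u - \Pi_H \bm u)\|_0 \;\leq\; C H^{-1}\,\|Q_H \bm u - \Pi_H \bm u\|_0 ,
\end{align*}
and a triangle inequality together with the best-approximation property of $Q_H$ gives
\begin{align*}
\|Q_H\bm u - \Pi_H\bm u\|_0 \leq \|Q_H\bm u - \bm u\|_0 + \|\bm u - \Pi_H \bm u\|_0 \leq 2\|\bm u - \Pi_H \bm u\|_0 \leq C H\,|\bm u|_1 .
\end{align*}
Combining these three estimates produces $\|\bm{{\rm curl}}(Q_H\bm u)\|_0 \leq C|\bm{u}|_1$, as required.

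The main obstacle is picking the right quasi-interpolation operator: the natural Nédélec canonical interpolant is not well-defined on $\bm{H}^1$ in 3D (the edge moments are not continuous on $\bm{H}^1$), so one must invoke one of the smoothed/averaged variants whose construction is nontrivial. Once such an operator is available with the two properties above, the proof is essentially a two-line comparison argument plus the standard inverse inequality on the quasi-uniform coarse mesh $\mathcal{T}_H$.
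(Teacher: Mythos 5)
The paper offers no proof of this lemma; it simply cites Toselli and Toselli--Widlund, and the argument you give (compare $Q_H\bm u$ with a curl-conforming quasi-interpolant $\Pi_H\bm u$, use the $\bm L^2$-best-approximation property of $Q_H$, and absorb the difference with the inverse inequality on the quasi-uniform coarse mesh) is exactly the standard mechanism behind those citations, and it is sound once the Sch\"oberl/Ern--Guermond type operator is accepted as a black box. One caveat worth making explicit: because $\mathcal{ND}_{H,0}$ imposes a vanishing tangential trace, the estimate $\|\bm u-\Pi_H\bm u\|_0\le CH|\bm u|_1$ (and hence the second bound of the lemma) cannot hold for arbitrary $\bm u\in\bm H^1(\Omega)$ --- a constant field already gives an $O(H^{1/2})$ lower bound for approximation from the zero-trace space --- so your restriction of $\Pi_H$ to $\bm H^1(\Omega)\cap\bm H_0(\bm{\rm curl};\Omega)$ is not cosmetic but necessary, and it in fact repairs a slight imprecision in the lemma's statement; this restriction is harmless for the paper, since the lemma is only applied to $P_h\bm w_h^\perp\in\bm H_0^\perp(\bm{\rm curl};\Omega)$, which has the required boundary behavior. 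With that understanding, your two-line comparison argument plus the inverse inequality gives both estimates correctly.
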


\begin{lemma}\label{Lemma_Inequality_of_InnerStri}
Let $\bm{w}$ be a piecewise $H^1$ function, i.e. $\bm w|_{\Omega_{i}}\in \bm{H}^{1}(\Omega_{i})$ on each $\Omega_{i}$. It holds that
\begin{align} \label{w-b}
  ||\bm{w}||_{0,\Omega_{i_0,\delta} }^{2}
\leq C\delta ^{2}\sum_{j=0}^{I_{0}}
    \left\{\big(1+\frac{H}{\delta }\big)|\bm{w}|_{1,\Omega_{i_j}}^{2}
+\frac{1}{\delta H}||\bm{w}||^{2}_{0,\Omega_{i_j}}\right\},
\end{align}
where $\Omega_{i_0,\delta}$ is the layer of small elements around the boundary
   of $\Omega_{i_{0}}'$, defined in \eqref{domains},
  and $\Omega_{i_1}, \dots, \Omega_{i_{I_0}}$ are the $I_0$ neighbor subdomains which
   have nonempty intersection 
   $\Omega_{i_j}'\cap \Omega_{i_0}'$ in the definition \eqref{domains} of $\Omega_{i_0,\delta}$.
\end{lemma}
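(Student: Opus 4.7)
My plan is to reduce the lemma to a single, scale-explicit \emph{strip inequality} on one subdomain and then sum its contributions across the $I_0$ overlap pieces that make up $\Omega_{i_0,\delta}$. Specifically, by the definition \eqref{domains},
\begin{equation*}
\Omega_{i_0,\delta}=\bigcup_{j=1}^{I_0}\Omega_{i_0,i_j,\delta},\qquad \Omega_{i_0,i_j,\delta}\subset (\Omega_{i_0}\cap\Omega_{i_j}')\cup(\Omega_{i_j}\cap\Omega_{i_0}'),
\end{equation*}
so each $\Omega_{i_0,i_j,\delta}$ splits into two ``half strips'' of width $O(\delta)$, one lying inside $\Omega_{i_0}$ along $\partial\Omega_{i_0}\cap\overline{\Omega_{i_j}}$ and one lying inside $\Omega_{i_j}$ along the same interface. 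Because $\bm w$ is $\bm H^1$ on each subdomain separately, I can apply a one-sided strip bound on each half with respect to the corresponding subdomain, and the piecewise structure of $\bm w$ is respected.

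The main technical ingredient is the strip estimate: for any $v\in H^1(K)$ with $K$ of diameter $H$, and any subset $S_\delta\subset K$ of width $\delta\le H$ along a portion of $\partial K$,
\begin{equation*}
\|v\|_{0,S_\delta}^{2}\le C\Bigl(\delta H\,|v|_{1,K}^{2}+\frac{\delta}{H}\|v\|_{0,K}^{2}+\delta^{2}|v|_{1,K}^{2}\Bigr).
\end{equation*}
I would prove this by pulling back to a reference subdomain $\hat K$ of unit size (with strip width $\eta=\delta/H$), writing $\hat v(\hat x)=\hat v(\pi(\hat x))+\int_{\pi(\hat x)}^{\hat x}\partial_\nu\hat v$ where $\pi$ is the projection onto the boundary piece, so that
\begin{equation*}
\|\hat v\|_{0,\hat S_\eta}^{2}\le C\eta\,\|\hat v\|_{L^{2}(\hat\Gamma)}^{2}+C\eta^{2}|\hat v|_{1,\hat K}^{2},
\end{equation*}
and then applying the standard trace inequality $\|\hat v\|_{L^{2}(\hat\Gamma)}^{2}\le C(\|\hat v\|_{0,\hat K}^{2}+|\hat v|_{1,\hat K}^{2})$. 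Scaling by $H$ with the usual factors $H^{d}$, $H^{d-2}$ returns the displayed strip bound.

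Finally, I would assemble: apply the strip inequality componentwise on each half strip to obtain
\begin{equation*}
\|\bm w\|_{0,\Omega_{i_0,i_j,\delta}}^{2}\le C\Bigl(\delta H\bigl(|\bm w|_{1,\Omega_{i_0}}^{2}+|\bm w|_{1,\Omega_{i_j}}^{2}\bigr)+\tfrac{\delta}{H}\bigl(\|\bm w\|_{0,\Omega_{i_0}}^{2}+\|\bm w\|_{0,\Omega_{i_j}}^{2}\bigr)\Bigr),
\end{equation*}
sum over $j=1,\dots,I_0$ using shape regularity (which makes the overlap multiplicity of the strips bounded, and also absorbs the $\delta^{2}|v|_{1}^{2}$ term into $\delta H|v|_{1}^{2}$ since $\delta\le H$), and rewrite $\delta H = \delta^{2}(H/\delta)$ and $\delta/H=\delta^{2}/(\delta H)$ to arrive at the stated form $\delta^{2}(1+H/\delta)|\bm w|_{1,\Omega_{i_j}}^{2}+\frac{\delta}{H}\|\bm w\|_{0,\Omega_{i_j}}^{2}$, including the $j=0$ contribution from the $\Omega_{i_0}$-side halves.

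The main obstacle I anticipate is bookkeeping rather than analysis: the pieces $\Omega_{i_0,i_j,\delta}$ do not exactly coincide with nice one-sided strips on a single subdomain (near cross points where three or more subdomains meet, a point of $\Omega_{i_0,\delta}$ may simultaneously be within $\delta$ of several interfaces), so I need shape regularity of $\mathcal T_H$ and Assumption \ref{Assumption} to guarantee bounded overlap multiplicity and to legitimately identify each $\Omega_{i_0,i_j,\delta}$ with a strip of width comparable to $\delta$ inside $\Omega_{i_0}\cup\Omega_{i_j}$. Once this geometric setup is pinned down, the scaling argument and summation are routine.
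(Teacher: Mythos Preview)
Your plan is correct and coincides with the paper's own proof: cover $\Omega_{i_0,\delta}$ by one-sided strips each contained in a single coarse element (so that the piecewise $\bm H^1$ regularity applies), bound the $L^2$ norm on a strip by $C\delta^2|\bm w|_{1}^2+C\delta\|\bm w\|_{0,\partial K}^2$, apply the scaled trace inequality on the coarse element, and sum. The only cosmetic difference is that the paper obtains the strip bound by partitioning the strip into $\delta$-sized patches and invoking Poincar\'e--Friedrichs on each patch, whereas you derive it by scaling to a reference element; the corner/cross-point bookkeeping you flag is exactly the reason the paper lists the additional ``neighbor--neighbor'' strips $\Omega_{i_k}'\cap\Omega_{i_l}$ in its covering.
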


\begin{proof}
A similar proof is given in \cite{MR2104179} for Lemma 3.10 there, except we have a
  piecewise $\bm H^1$ function while it is global $\bm H^1$ in  \cite{MR2104179}. 
For simplicity, we illustrate the proof in 2D case. The 3D case is same. 

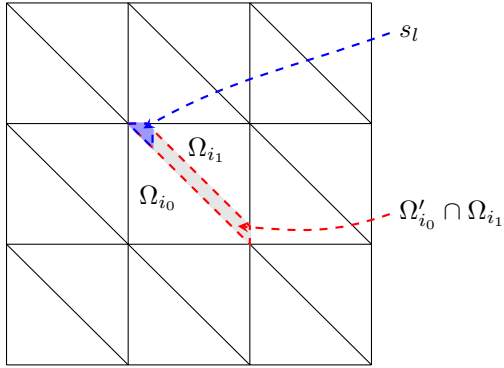
\begin{figure}[H]
\centering
\begin{tikzpicture}[scale=0.8]
\draw [step=2] (0,0) grid (6,6);
\draw [black] (0,6) -- (6,0);
\draw [black] (0,4) -- (4,0);
\draw [black] (0,2) -- (2,0);
\draw [black] (2,6) -- (6,2);
\draw [black] (4,6) -- (6,4);

\pgfmathsetmacro{\mysqrt}{sqrt(2)}
\filldraw [gray!20,thick] (2,4)--(2.3,4)--(4,2.3)--(4,2)--(2,4);
\filldraw [blue!40,thick]  (2,4)--(2.3,4)--(2.4,3.9)--(2.4,3.6)--(2,4);

\draw [red,thick,dashed]   (2,4)--(2.3,4)--(4,2.3)--(4,2)--(2,4);
\draw [blue,thick,dashed]  (2,4)--(2.3,4)--(2.4,3.9)--(2.4,3.6)--(2,4);

\node at (2.5,2.4) [above] { $\Omega_{i_0}$};
\node at (3.3,3.2) [above] { $\Omega_{i_{1}}$};

\draw [red,thick,dashed,->] (6.3,2.5) to [in = -5, out = 200] (3.8,2.3);
\node at (6.3,2.5) [right] { $\Omega_{i_0}' \cap \Omega_{i_{1}}$};

\draw [blue,thick,dashed,->] (6.3,5.5) to [in = 30, out =200] (2.25,3.92);
\node at (6.3,5.5) [right] { $s_{l}$};

\end{tikzpicture}
\caption{The definition of $s_l$ and the stripe $\Omega_{i_0}' \cap \Omega_{i_{1}}$.}
\label{Figure_triangular_mesh_1}
\end{figure}

We claim all triangles in $\Omega_{i_0,\delta}$ belong to at least one of the following
  stripes, cf. Figures \ref{Figure_triangular_mesh_1} and \ref{Figure_triangular_mesh_2}
   where $I_0=12$,
\begin{align}\label{list-s} \begin{aligned}
    \Omega'_{i_1} \cap \Omega_{i_0}, \ \;
    \Omega_{i_5}'\cap \Omega_{i_0}, \ \;
         \Omega_{i_9}'\cap \Omega_{i_0}, \ \;
    \Omega_{i_1}'\cap \Omega_{i_2}, \ \dots, \ \;
           \Omega_{i_{I_0}}'\cap \Omega_{i_1}, \\
   \Omega_{i_1}\cap \Omega_{i_0}', \ \;
   \Omega_{i_5} \cap \Omega_{i_0}', \ \;
         \Omega_{i_9} \cap \Omega_{i_0}', \ \;
    \Omega_{i_1} \cap \Omega_{i_2}', \ \dots,  \ \;
           \Omega_{i_{I_0}} \cap \Omega_{i_1}'.  \end{aligned}
\end{align} We note  that inside each stripe $\bm w$ is $\bm H^1$.
We will prove \eqref{w-b} on one stripe first, then get \eqref{w-b} by summing over
   all strips in \eqref{list-s}.

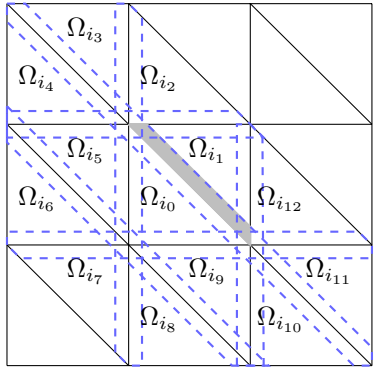
\begin{figure}[H]
\centering
\begin{tikzpicture}[scale=0.8]
\draw [step=2] (0,0) grid (6,6);
\draw [black] (0,6) -- (6,0);
\draw [black] (0,4) -- (4,0);
\draw [black] (0,2) -- (2,0);
\draw [black] (2,6) -- (6,2);
\draw [black] (4,6) -- (6,4);

\pgfmathsetmacro{\mysqrt}{sqrt(2)}

\filldraw [gray!50,thick] (2,4)--(2.3,4)--(4,2.3)--(4,2)--(2,4);

\draw [blue!60,thick,dashed]   (0,6)--  (0.3,6)--(6,0.3)--(6,0)--(5.7,0)--(0,5.7)--(0,6);
\draw [blue!60,thick,dashed]   (0,4.3)--(4.3,0)--(3.7,0)--(0,3.7)--(0,4.3);
\draw [blue!60,thick,dashed]   (1.78,0.22)--(1.78,6)--(2,6)--(2.22,5.78)--(2.22,0)--(2,0)--(1.78,0.22);
\draw [blue!60,thick,dashed]   (0,2)--(0,2.22)--(5.78,2.22)--(6,2)--(6,1.78)--(0.22,1.78)--(0,2);
\draw [blue!60,thick,dashed]   (3.78,0)--(3.78,4)--(4,4)--(4.2,3.78)--(4.22,0)--(4,0)--(3.78,0);
\draw [blue!60,thick,dashed]   (0,3.78)--(0,4.22)--(3.78,4.22)--(4.22,3.78)--(0,3.78);


\node at (2.5,2.4) [above] { $\Omega_{i_{0}}$};
\node at (3.3,3.2) [above] { $\Omega_{i_{1}}$};
\node at (2.5,4.4) [above] { $\Omega_{i_{2}}$};
\node at (1.3,5.2) [above] { $\Omega_{i_{3}}$};
\node at (0.5,4.4) [above] { $\Omega_{i_{4}}$};
\node at (1.3,3.2) [above] { $\Omega_{i_{5}}$};
\node at (0.5,2.4) [above] { $\Omega_{i_{6}}$};
\node at (1.3,1.2) [above] { $\Omega_{i_{7}}$};
\node at (2.5,0.4) [above] { $\Omega_{i_{8}}$};
\node at (3.3,1.2) [above] { $\Omega_{i_{9}}$};
\node at (4.5,0.4) [above] { $\Omega_{i_{10}}$};
\node at (5.3,1.2) [above] { $\Omega_{i_{11}}$};
\node at (4.5,2.4) [above] { $\Omega_{i_{12}}$};
\end{tikzpicture}

\caption{All $\Omega_{i_j}$ related to the estimation on $\Omega_{i_{0},\delta}$}
\label{Figure_triangular_mesh_2}
\end{figure}

\par We separate one stripe $\Omega_{i_0}'\cap \Omega_{i_1}$ into finite patches $\{s_l\}_{l=1}^{n_{i_{0}}}$
     of triangles $\tau_{j}$ from $\mathcal{T}_h$,  cf. Figure \ref{Figure_triangular_mesh_1},
\begin{align*}  
s_l = \cup_{j=1}^{m_{l}} \tau_{j},  \quad \ \ |s_l| = C\delta^2, \quad |\partial s_l \cap \partial \Omega_{i_{1}} | = C\delta.  
\end{align*}
By using the Ponicar\'e-Friedrichs inequality on $s_{l}$, because $\bm{w}\in \bm{H}^{1}(\Omega_{i})$,  we have
\begin{align*}
||\bm{w}||_{0,s_{l}}^{2}\leq C\delta ^{2}|\bm{w}|_{1,s_{l}}^{2}+C\delta ||\bm{w}||^{2}_{0,\partial{s_{l}}\cap \partial{\Omega_{i_{1}}}}.
\end{align*}
Summing over the patches $\{s_{l}\}_{1\leq l\leq n_{i_0}}$,
    we get, using the trace theorem in $\Omega_{i_{1}}$
\begin{align*}
||\bm{w}||_{0,\Omega_{i_0}'\cap \Omega_{i_1} }^{2}
&\leq C\delta ^{2}|\bm{w}|_{1,\Omega_{i_1}}^{2}
+C\delta ||\bm{w}||^{2}_{0,\partial{\Omega_{i_0}}\cap \partial{\Omega_{i_{1}}}}\\ 
&\leq C\delta ^{2}|\bm{w}|_{1,\Omega_{i_1}}^{2}
    +C\delta H|\bm{w}|^{2}_{1,\Omega_{i_{1}}}
+C\delta H^{-1}||\bm{w}||^{2}_{0,\Omega_{i_{1}}}\\ 
&\leq C\delta ^{2}\{\big(1+\frac{H}{\delta }\big)|\bm{w}|_{1,\Omega_{i_{1}}}^{2}
+\frac{1}{\delta H}||\bm{w}||^{2}_{0,\Omega_{i_{1}}}\}.
\end{align*}

Summing over all strips $\Omega_{i_j}'\cap \Omega_{i_k}$
   in \eqref{list-s}, by finite covering, we get \eqref{w-b}.
 This completes the proof of the lemma. \qed
\end{proof}

For any $\bm{u}_{h}\in \mathcal{ND}_{h,0}$, we decompose it as 
    $\bm{u}_{h}=  \sum_{i=0}^N \bm{u}_{i} $, where
\begin{align}\label{d-u} \begin{aligned}
\bm{u}_{0} &:=\nabla{q_{0}}+\bm{w}_{0}\in \mathcal{ND}_{H,0},\\ 
 \bm{u}_{i}&:=\nabla{q_{i}}+\bm{w}_{i}\in \bm{V}_{i},\ \ \ \ i=1,2,...,N,\\
  q_{0} &= \tilde I_H q_h, \\
  q_i &= I_h (\theta_i(q_h-q_0)), \\
  \bm{w}_{0} &=Q_H P_h \bm w_h^\perp, \\
  \bm{w}_{i} &= \Pi_{E,h} (\theta_i (\bm w_h^\perp - \bm w_0)),  \\
  \bm{u}_{h}&= \nabla q_h + \bm w_h^\perp,
\end{aligned} 
\end{align} 
where $q_h$ and $\bm w_h^\perp$ are defined in \eqref{D-decompose}, 
 $\Pi_{E,h}$ is the H-curl interpolation operator to $\mathcal{ND}_{h,0}$,
   $\theta_i$ is defined in \eqref{theta}, $P_h$ is defined in \eqref{P-h}, 
  $Q_H$ is defined in \eqref{Q-h}, $I_h$ is the nodal value interpolation to 
  $S_{h,0}$ defined in \eqref{ND-p} and $\tilde I_H$ is the Scott-Zhang interpolation
   operator (see \cite{MR1011446}) to $S_{H,0}$. In \eqref{d-u}, the operation $\Pi_{E,h} (\theta_i (\bm w_h^\perp - \bm w_0))$ is well-defined, because $\theta_i (\bm w_h^\perp - \bm w_0)\in \bm{H}_{0}(\bm{{\rm curl}};\Omega)$ and $\bm w_0\in \mathcal{ND}_{H,0}$.
   We check the decomposition,
   \begin{align*}
   \sum_{i=0}^{N}\bm{u}_{i}
   &=\nabla{q_{0}}+\bm{w}_{0}+\sum_{i=1}^{N}(\nabla{q_{i}}+\bm{w}_{i})\\
   &=\nabla{q_{0}}+\bm{w}_{0}+\sum_{i=1}^{N}\left(\nabla{I_h (\theta_i(q_h-q_0))}+\Pi_{E,h} (\theta_i (\bm w_h^\perp - \bm w_0))\right)\\
   &=\nabla{q_{0}}+\bm{w}_{0}+\nabla{I_h \left(\sum_{i=1}^{N}\theta_i(q_h-q_0)\right)}
   +\Pi_{E,h} \left(\sum_{i=1}^{N}\theta_i (\bm w_h^\perp - \bm w_0)\right)\\
   &=\nabla{q_{0}}+\bm{w}_{0}+\nabla{I_h \left(q_h-q_0\right)}
   +\Pi_{E,h} \left(\bm w_h^\perp-\bm w_0\right)\\
   &=\nabla{q_{0}}+\bm{w}_{0}+\nabla{\left(q_h-q_0\right)}
   +\left(\bm w_h^\perp-\bm w_0\right)\\
   &= \nabla q_h + \bm w_h^\perp=\bm{u}_{h}.
   \end{align*}
  
\par In order to prove the lower bound in \eqref{Align_Mainresult}, we only need to give a stable decomposition (see Chapter 2 in \cite{MR2104179}), and then prove that the stable parameter can be bounded by $C(1+\frac{H}{\delta})$.  
\begin{theorem}\label{Theorem_Stable_Decomposition}
Let $\bm{u}_{h}\in \mathcal{ND}_{h,0}$ be decomposed in \eqref{d-u}.
It holds that 
\begin{align}\label{upper}
  \sum_{i=0}^{N}a_{\bm{{\rm curl}}}(\bm{u}_{i},\bm{u}_{i})
\leq C(1+\frac{H}{\delta})a_{\bm{{\rm curl}}}(\bm{u}_{h},\bm{u}_{h}). 
\end{align}
\end{theorem}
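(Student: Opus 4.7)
My plan is to use the discrete Helmholtz decomposition $\bm u_h=\nabla q_h+\bm w_h^\perp$ to split the analysis into a gradient part and a solenoidal part. The $\bm L^2$-orthogonality built into \eqref{D-decompose} gives $\|\nabla q_h\|_0^2+\|\bm w_h^\perp\|_0^2=\|\bm u_h\|_0^2$ and $\bm{{\rm curl}}\,\bm u_h=\bm{{\rm curl}}\,\bm w_h^\perp$, while $\bm{{\rm curl}}(\nabla q_i)=\bm 0$ eliminates the gradient components from every $\bm{{\rm curl}}\,\bm u_i$. Thus $a_{\bm{{\rm curl}}}(\bm u_i,\bm u_i)\leq \|\bm{{\rm curl}}\,\bm w_i\|_0^2+2\|\nabla q_i\|_0^2+2\|\bm w_i\|_0^2$, reducing the proof to estimating the three sums $\sum\|\nabla q_i\|_0^2$, $\sum\|\bm w_i\|_0^2$, and $\sum\|\bm{{\rm curl}}\,\bm w_i\|_0^2$.

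The first sum is purely scalar. Because $q_0=\tilde I_Hq_h$ is the Scott--Zhang coarse interpolant of $q_h\in S_{h,0}\subset H^1_0(\Omega)$ and $q_i=I_h(\theta_i(q_h-q_0))$ is the standard partition-of-unity local piece, this is exactly the sharp $H^1$ overlapping Schwarz decomposition of Dryja--Widlund and Brenner. Combining $H^1$-stability and approximation of $\tilde I_H$, the inverse estimate for $I_h$, and $|\nabla\theta_i|\leq C/\delta$ supported in $\Omega_{i,\delta}$, one obtains $\sum_{i=0}^{N}\|\nabla q_i\|_0^2\leq C(1+H/\delta)\|\nabla q_h\|_0^2\leq C(1+H/\delta)\|\bm u_h\|_0^2$.

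For the solenoidal sums the key is to pass from $\bm w_h^\perp$ (not globally $\bm H^1$) to $P_h\bm w_h^\perp\in\bm V^+\hookrightarrow\bm H^1(\Omega)$ via the embedding \eqref{Align_Embedding}, controlling $|P_h\bm w_h^\perp|_1$ by $\|\bm{{\rm curl}}\,\bm w_h^\perp\|_0$ using \eqref{P-e} and a Poincar\'e inequality on $\bm H^\perp_0(\bm{{\rm curl}};\Omega)$. Lemma \ref{Lemma_Ph} then gives $\|\bm w_h^\perp-P_h\bm w_h^\perp\|_0\leq Ch\|\bm{{\rm curl}}\,\bm w_h^\perp\|_0$ and Lemma \ref{Lemma_QH} applied to $P_h\bm w_h^\perp$ yields $\|\bm{{\rm curl}}\,\bm w_0\|_0\leq C\|\bm{{\rm curl}}\,\bm w_h^\perp\|_0$ together with $\|P_h\bm w_h^\perp-Q_HP_h\bm w_h^\perp\|_0\leq CH\|\bm{{\rm curl}}\,\bm w_h^\perp\|_0$. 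For $\bm w_i=\Pi_{E,h}(\theta_i(\bm w_h^\perp-\bm w_0))$ I would apply the product rule $\bm{{\rm curl}}(\theta_i\bm v)=\nabla\theta_i\times\bm v+\theta_i\bm{{\rm curl}}\bm v$ with $\bm v=\bm w_h^\perp-\bm w_0$, and use the standard N\'ed\'elec interpolation error bounds to pass through $\Pi_{E,h}$.

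The main obstacle—and the sole source of the sharp exponent—is the estimate of $\|\bm w_h^\perp-\bm w_0\|_{0,\Omega_{i,\delta}}^2$ on the narrow overlap layer. A naive $H$-bound combined with $|\nabla\theta_i|^2\leq C/\delta^2$ would only give the old $C(1+H^2/\delta^2)$; to recover $C(1+H/\delta)$ the overlap contribution must scale like $\delta^2(1+H/\delta)$. This is exactly where Lemma \ref{Lemma_Inequality_of_InnerStri} is decisive. Decomposing $\bm w_h^\perp-\bm w_0=(\bm w_h^\perp-P_h\bm w_h^\perp)+(P_h\bm w_h^\perp-Q_HP_h\bm w_h^\perp)$, the second piece is a globally $\bm H^1$ function minus its coarse N\'ed\'elec projection, hence only piecewise $\bm H^1$ on the coarse elements $K_{i_j}$—precisely the hypothesis of the lemma. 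Bounding $|Q_HP_h\bm w_h^\perp|_{1,K_{i_j}}$ by an inverse inequality (or the local constant argument surrounding $Q_{0,\Omega_i}$ in \eqref{Align_Q0i_Approximation}) by $CH^{-1}$ times local $L^2$ norms of $\bm w_h^\perp$ and $\bm{{\rm curl}}\,\bm w_h^\perp$, and summing over $i$ via the finite-overlap Assumption \ref{Assumption}, closes the estimate.
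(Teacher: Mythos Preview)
Your proposal is correct and follows essentially the same route as the paper: the discrete Helmholtz split, the known sharp $H^1$ bound for the $q_i$, the passage to $P_h\bm w_h^\perp\in\bm H^1$ via \eqref{Align_Embedding} and \eqref{P-e}, the splitting $\bm w_h^\perp-\bm w_0=(\bm w_h^\perp-P_h\bm w_h^\perp)+(P_h\bm w_h^\perp-Q_HP_h\bm w_h^\perp)$ with Lemma~\ref{Lemma_Ph} on the first piece and Lemma~\ref{Lemma_Inequality_of_InnerStri} on the second, and the $Q_{0,\Omega_i}$/inverse-estimate trick to control $|Q_HP_h\bm w_h^\perp|_{1,\Omega_i}$ are exactly the ingredients the paper uses. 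The only cosmetic difference is that the paper bounds $\sum_i a_{\bm{{\rm curl}}}(\nabla q_i,\nabla q_i)$ and $\sum_i a_{\bm{{\rm curl}}}(\bm w_i,\bm w_i)$ separately and then invokes the $a_{\bm{{\rm curl}}}$-orthogonality of $\nabla q_h$ and $\bm w_h^\perp$, rather than your pointwise inequality $a_{\bm{{\rm curl}}}(\bm u_i,\bm u_i)\le\|\bm{{\rm curl}}\bm w_i\|_0^2+2\|\nabla q_i\|_0^2+2\|\bm w_i\|_0^2$.
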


\begin{proof}
By the discrete Helmholtz decomposition in the last equation in \eqref{d-u},
we decompose $\bm{u}_{h}$ in \eqref{d-u} as 
\begin{align*} 
\bm{u}_{h}=\nabla{q_0}+\bm{w}_{0} + \sum_{i=1}^N (\nabla{q_i}+\bm{w}_{i}).
\end{align*}

\par For the terms $\nabla{q_{i}}$ of $\bm u_h$, using the result for continuous finite element spaces conforming in $H_{0}^{1}(\Omega)$ given in the proof of lemma 3.12 in \cite{MR2104179}
and \eqref{d-u}, we have
\begin{align}\label{Align_Stable_Decomposition_kercurl}
\begin{aligned}
\sum_{i=0}^{N}a_{\bm{{\rm curl}}}(\nabla{q_{i}},\nabla{q_{i}})
&=\sum_{i=0}^{N}|q_{i}|^{2}_{1,\Omega_{i}^{'}}
 \leq C(1+\frac{H}{\delta})|q_{h}|^{2}_{1}\\
&=C(1+\frac{H}{\delta}) a_{\bm{{\rm curl}}}(\nabla{q_{h}},\nabla{q_{h}}).
\end{aligned}
\end{align}

\par For the terms $\bm{w}_{i}$ of $\bm u_h$,  we will prove the following by the four steps below. 
\begin{align}\label{Align_Stable_Decomposition_perp}
\begin{aligned}
\sum_{i=0}^{N}a_{\bm{{\rm curl}}}(\bm{w}_{i},\bm{w}_{i})
&=||\bm{{\rm curl}}\bm{w}_{0}||_{0}^{2}+||\bm{w}_{0}||_{0}^{2}
+\sum_{i=1}^{N}||\bm{{\rm curl}}\bm{w}_{i}||_{0,\Omega_{i}^{'}}^{2}+\sum_{i=1}^{N}||\bm{w}_{i}||_{0,\Omega_{i}^{'}}^{2}\\
&\leq C(1+\frac{H}{\delta})a_{\bm{{\rm curl}}}(\bm{w}^{\perp}_{h},\bm{w}^{\perp}_{h}).
\end{aligned}
\end{align}
\begin{itemize}
\item[(1)] For the term $||\bm{{\rm curl}}\bm{w}_{0}||_{0}$ in \eqref{Align_Stable_Decomposition_perp}, by \eqref{d-u}, \eqref{Q-h},
   \eqref{Align_Embedding} 
     and \eqref{P-e}, we get
\begin{align}\label{Align_Coarse_Component_a1}
||\bm{{\rm curl}}\bm{w}_{0}||_{0}
=||\bm{{\rm curl}}Q_{H}P_{h}\bm{w}_{h}^{\perp}||_{0}
\leq C|P_{h}\bm{w}_{h}^{\perp}|_{1}
\leq C||\bm{{\rm curl}}P_{h}\bm{w}_{h}^{\perp}||_{0}
= C||\bm{{\rm curl}}\bm{w}_{h}^{\perp}||_{0}.
\end{align}
\item[(2)] For the term $||\bm{w}_{0}||_{0}$ in \eqref{Align_Stable_Decomposition_perp}, by \eqref{d-u}, $\bm{L}^2$-orthogonal projector $Q_H$ and \eqref{P-T}, we have
\begin{align}\label{Align_Coarse_Component_a2}
||\bm{w}_{0}||_{0}
=||Q_{H}P_{h}\bm{w}_{h}^{\perp}||_{0}
\leq ||P_{h}\bm{w}_{h}^{\perp}||_{0}
=||\Theta^{\perp}\bm{w}_{h}^{\perp}||_{0}
\leq  ||\bm{w}_{h}^{\perp}||_{0}.
\end{align} Here the last inequality follows the argument
\begin{align*}
  ( \Theta^{\perp}\bm{w}_{h}^{\perp} , \bm g)_0
   = ( \Theta^{\perp}(\nabla s+ \tilde {\bm{w}}^{\perp}) , \bm g)_0
   = ( \tilde {\bm{w}}^{\perp} , \bm g)_0
   = ( \nabla s+ \tilde {\bm{w}}^{\perp} , \bm g)_0 
   = (  \bm{w}_h^{\perp} , \bm g)_0 
   \end{align*} 
with $\bm g=\Theta^{\perp}\bm{w}_{h}^{\perp}$ and the Cauchy-Schwarz inequality.
   
\item[(3)] For the terms $||\bm{{\rm curl}}\bm{w}_{i}||_{0,\Omega_{i}^{'}}$ in \eqref{Align_Stable_Decomposition_perp}, by the properties of $\{\theta_{i}\}_{i=1}^{N}$ and $\Pi_{E,h}$ (see Lemma 10.8 in \cite{MR2104179}), we have
\begin{align*}
\sum_{i=1}^{N}||\bm{{\rm curl}}\bm{w}_{i}||_{0,\Omega_{i}^{'}}^{2}
\leq C\sum_{i=1}^{N}\delta ^{-2}||\bm{v}||_{0,\Omega_{i,\delta }}^{2}+C||\bm{{\rm curl}}\bm{v}||_{0}^{2},
\end{align*}
where \begin{align*} \bm v = \bm{w}_h^\perp -\bm w_0=  \bm{w}_h^\perp 
   - Q_{H}P_{h}\bm{w}_{h}^{\perp}.  
\end{align*}
Denoting by $\widetilde{\bm{v}}:=P_{h}\bm{w}_{h}^{\perp}-Q_{H}P_{h}\bm{w}_{h}^{\perp}$ and using the triangle inequality, we obtain from above inequality that 
\begin{align} \label{Align_Local_Estimates_I1I2I3}
    \begin{aligned}
    \sum_{i=1}^{N}||\bm{{\rm curl}}\bm{w}_{i}||_{0,\Omega_{i}^{'}}^{2}
    &\leq C\sum_{i=1}^{N}\delta ^{-2}||\widetilde{\bm{v}}||_{0,\Omega_{i,\delta }}^{2}+ C\sum_{i=1}^{N}\delta ^{-2}||\widetilde{\bm{v}}-\bm{v}||_{0,\Omega_{i,\delta }}^{2}+C||\bm{{\rm curl}}\bm{v}||_{0}^{2}\\
    &=:I_{1}+I_{2}+I_{3}.
    \end{aligned}
\end{align}

For the first term $I_{1}$ in \eqref{Align_Local_Estimates_I1I2I3}, by  Lemma \ref{Lemma_Inequality_of_InnerStri} and the fact that $\widetilde{\bm{v}}\in \bm{H}^{1}(\Omega_{i})$, we get, because of finite overlapping,
\begin{align}\label{Align_widetildebmv_estimate3}
I_{1}=C\sum_{i=1}^{N}\delta ^{-2}||\widetilde{\bm{v}}||_{0,\Omega_{i,\delta }}^{2}
&\leq C(1+\frac{H}{\delta})\sum_{i=1}^{N}|\widetilde{\bm{v}}|^{2}_{1,\Omega_{i}}+ C\frac{1}{\delta H}||\widetilde{\bm{v}}||^{2}_{0,\Omega}.
\end{align}
For the first term in \eqref{Align_widetildebmv_estimate3}, 
   by the triangle inequality, the inverse estimate
    and \eqref{Align_Q0i_Approximation}, we obtain
\begin{align*}
|\widetilde{\bm{v}}|_{1,\Omega_{i}} 
&= |P_{h}\bm{w}_{h}^{\perp}-Q_{H}P_{h}\bm{w}_{h}^{\perp}|_{1,\Omega_i}  
 \leq |P_{h}\bm{w}_{h}^{\perp}|_{1,\Omega_{i}}
+|Q_{H}P_{h}\bm{w}_{h}^{\perp} |_{1,\Omega_{i}}\\
&= |P_{h}\bm{w}_{h}^{\perp}|_{1,\Omega_{i}}
+|Q_{H}P_{h}\bm{w}_{h}^{\perp}-Q_{0,\Omega_{i}}P_{h}\bm{w}_{h}^{\perp}|_{1,\Omega_{i}}\\
&\leq |P_{h}\bm{w}_{h}^{\perp}|_{1,\Omega_{i}}
   +CH^{-1}||Q_{H}P_{h}\bm{w}_{h}^{\perp}-Q_{0,\Omega_{i}}P_{h}\bm{w}_{h}^{\perp}||_{0,\Omega_{i}}\\
&\leq |P_{h}\bm{w}_{h}^{\perp}|_{1,\Omega_{i}}
    +CH^{-1}|| Q_{H}P_{h}\bm{w}_{h}^{\perp} - P_{h}\bm{w}_{h}^{\perp}||_{0,\Omega_{i}}\\
&\ \ \ \ +CH^{-1}||P_{h}\bm{w}_{h}^{\perp} 
     -Q_{0,\Omega_{i}}P_{h}\bm{w}_{h}^{\perp}
        ||_{0,\Omega_{i}}\\
&\leq |P_{h}\bm{w}_{h}^{\perp}|_{1,\Omega_{i}}+CH^{-1}||P_{h}\bm{w}_{h}^{\perp}-Q_{H}P_{h}\bm{w}_{h}^{\perp}||_{0,\Omega_{i}}+C|P_{h}\bm{w}_{h}^{\perp}|_{1,\Omega_{i}},
\end{align*}
which, together with \eqref{Q-h}, \eqref{Align_Embedding} 
     and \eqref{P-e}, yields
\begin{align}\label{Align_widetildebmv_estimate4}
    \begin{aligned}
    \sum_{i=1}^{N}|\widetilde{\bm{v}}|^{2}_{1,\Omega_{i}}
    &\leq C\sum_{i=1}^{N}|P_{h}\bm{w}_{h}^{\perp}|^{2}_{1,\Omega_{i}}+
    CH^{-2}\sum_{i=1}^{N}||P_{h}\bm{w}_{h}^{\perp}-Q_{H}P_{h}\bm{w}_{h}^{\perp}||^{2}_{0,\Omega_{i}}\\
    &\leq C|P_{h}\bm{w}_{h}^{\perp}|^{2}_{1}+CH^{-2}||P_{h}\bm{w}_{h}^{\perp}-Q_{H}P_{h}\bm{w}_{h}^{\perp}||_{0}^{2}\\
    &\leq C|P_{h}\bm{w}_{h}^{\perp}|^{2}_{1}
    \leq C||\bm{{\rm curl}}P_{h}\bm{w}_{h}^{\perp}||^{2}_{0}
    = C||\bm{{\rm curl}}\bm{w}_{h}^{\perp}||^{2}_{0}. 
    \end{aligned}
\end{align}

For the second term in \eqref{Align_widetildebmv_estimate3}, 
  by \eqref{Q-h}, \eqref{Align_Embedding} 
     and \eqref{P-e}, we have
\begin{align}\label{A-3} 
    \begin{aligned}
    \frac{C}{\delta H }||\widetilde{\bm{v}}||_{0}^{2}
    &=\frac{C}{\delta H}||P_{h}\bm{w}_{h}^{\perp}-Q_{H}P_{h}\bm{w}_{h}^{\perp}||_{0}^{2}\\
    &\leq \frac{C}{\delta H }CH^{2}|P_{h}\bm{w}_{h}^{\perp}|^{2}_{1}
    \leq C\frac{H}{\delta}||\bm{{\rm curl}}P_{h}\bm{w}_{h}^{\perp}||^{2}_{0}
    = C\frac{H}{\delta}||\bm{{\rm curl}}\bm{w}_{h}^{\perp}||^{2}_{0}. 
    \end{aligned}
\end{align}
Combining \eqref{Align_Local_Estimates_I1I2I3}, \eqref{Align_widetildebmv_estimate3}, \eqref{Align_widetildebmv_estimate4} and \eqref{A-3}, we get
\begin{align*}
I_{1}=C\sum_{i=1}^{N}\delta ^{-2}||\widetilde{\bm{v}}||_{0,\Omega_{i,\delta }}^{2}\leq C(1+\frac{H}{\delta})||\bm{{\rm curl}}\bm{w}_{h}^{\perp}||_{0}^{2}.
\end{align*}

For the second term $I_{2}$ in \eqref{Align_Local_Estimates_I1I2I3}, by Lemma \ref{Lemma_Ph}, we deduce
\begin{align*}
I_{2}&=C\sum_{i=1}^{N}\delta ^{-2}||\widetilde{\bm{v}}-\bm{v}||_{0,\Omega_{i,\delta }}^{2}
= C\delta^{-2}\sum_{i=1}^{N}||\bm{w}_{h}^{\perp}-P_{h}\bm{w}_{h}^{\perp}||_{0,\Omega_{i,\delta }}^{2}\\
&\leq C\delta^{-2}||\bm{w}_{h}^{\perp}-P_{h}\bm{w}_{h}^{\perp}||_{0}^{2}
\leq Ch^{2}\delta^{-2}||\bm{{\rm curl}}\bm{w}_{h}^{\perp}||_{0}^{2}\\
&\leq C||\bm{{\rm curl}}\bm{w}_{h}^{\perp}||_{0}^{2}.
\end{align*}
For the third term $I_{3}$ in \eqref{Align_Local_Estimates_I1I2I3}, we have,
  because of \eqref{Align_Coarse_Component_a1},
\begin{align*}
I_{3}=C||\bm{{\rm curl}}\bm{v}||_{0}^{2}\leq C\{||\bm{{\rm curl}}\bm{w}_{h}^{\perp}||_{0}^{2}+||\bm{{\rm curl}}\bm{w}_{0}||_{0}^{2}\}
\leq C||\bm{{\rm curl}}\bm{w}_{h}^{\perp}||_{0}^{2}.
\end{align*}
By using \eqref{Align_Local_Estimates_I1I2I3} and the estimates of three terms $I_{1},I_{2}$ and $I_{3}$, we obtain
\begin{align}\label{Align_Local_Estimates_curl}
\sum_{i=1}^{N}||\bm{{\rm curl}}\bm{w}_{i}||_{0,\Omega_{i}^{'}}^{2}\leq I_{1}+I_{2}+I_{3}\leq C(1+\frac{H}{\delta})
||\bm{{\rm curl}}\bm{w}_{h}^{\perp}||_{0}^{2}.
\end{align}
\item[(4)] For the terms $||\bm{w}_{i}||_{0,\Omega_{i}^{'}}$ in \eqref{Align_Stable_Decomposition_perp}, we have, by \eqref{d-u}, the fact $|\theta_i|\le 1$,
   triangle inequality,   finite overlapping and \eqref{Align_Coarse_Component_a2}, 
\begin{align}\label{Align_Local_Estimates_L2}
    \begin{aligned}
    \sum_{i=1}^{N}||\bm{w}_{i}||^{2}_{0,\Omega_{i}^{'}}
    &\leq C\sum_{i=1}^{N}||\theta_{i}(\bm{w}_{h}^{\perp}-\bm{w}_{0})||^{2}_{0,\Omega_{i}^{'}}
    \leq C\sum_{i=1}^{N}\{||\bm{w}_{h}^{\perp}||^{2}_{0,\Omega_{i}^{'}}+||\bm{w}_{0}||^{2}_{0,\Omega_{i}^{'}}\}\\
    &\leq C||\bm{w}_{h}^{\perp}||_{0}^{2}+C||\bm{w}_{0}||_{0}^{2}\leq C||\bm{w}_{h}^{\perp}||_{0}^{2}.
    \end{aligned}
\end{align}
\end{itemize}

Finally, combining \eqref{Align_Coarse_Component_a1}, \eqref{Align_Coarse_Component_a2} \eqref{Align_Local_Estimates_curl} and \eqref{Align_Local_Estimates_L2}, we complete the proof of \eqref{Align_Stable_Decomposition_perp}. 
By \eqref{d-u}, adding  \eqref{Align_Stable_Decomposition_kercurl} and  \eqref{Align_Stable_Decomposition_perp}, we get \eqref{upper},  noting that 
the decomposition in last equation of \eqref{d-u} is orthogonal under $a_{\bm{{\rm curl}}}(\cdot,\cdot)$. \qed
\end{proof}
\par Now we are in a position to give a proof of the main result.
\par\noindent{\bf Proof of Theorem \ref{Theorem_Mainresult}}:\ \ Based on Lemma 2.5 in Chapter 2 in \cite{MR2104179} and Theorem \ref{Theorem_Stable_Decomposition} above, we obtain
\begin{align*}
\frac{1}{C_{1}(1+\frac{H}{\delta})} a_{\bm{{\rm curl}}}(\bm{v}_{h},\bm{v}_{h})\leq a_{\bm{{\rm curl}}}(B_{h}^{-1}A_{h}\bm{v}_{h},\bm{v}_{h})\ \ \ \ \forall\ \bm{v}_{h}\in \mathcal{ND}_{h,0},
\end{align*}
which completes the proof of this theorem. \qed

\section{Extension to $\bm{H}({\rm div};\Omega)$}
\par In this section, we extend the theoretical techniques to the overlapping Schwarz method in $\bm{H}({\rm div};\Omega)$. For convenience of theoretical analysis, we first define some notations: 
\begin{align*}
\bm{H}_{0}({\rm div};\Omega):&=\{\ \bm{v}\in \bm{H}({\rm div};\Omega)\ |\ \bm{u}\cdot\bm{n}=0\ \},\\
\bm{H}_{0}^{\perp}({\rm div};\Omega):&=\bm{H}_{0}({\rm div};\Omega)\cap \bm{H}(\bm{{\rm curl}}_{0};\Omega),
\end{align*}
and we know that (see \cite{MR851383,MR1626990})
\begin{align}\label{div_embed}
\bm{H}_{0}^{\perp}({\rm div};\Omega)\hookrightarrow \bm{H}^{1}(\Omega).
\end{align}
It is known that the following $\bm{L}^{2}$-orthogonal (also $a_{{\rm div}}(\cdot,\cdot)$-orthogonal) decomposition holds (see \cite{MR2104179}):
\begin{align*}
\bm{H}_{0}({\rm div};\Omega)=\bm{{\rm curl}}\bm{H}_{0}(\bm{{\rm curl}};\Omega)\oplus \bm{H}_{0}^{\perp}({\rm div};\Omega)
=\bm{{\rm curl}}\bm{H}_{0}^{\perp}(\bm{{\rm curl}};\Omega)\oplus \bm{H}_{0}^{\perp}({\rm div};\Omega).
\end{align*}
\par Consider the model problem
\begin{equation}\notag
     \begin{cases}
        {\rm \nabla}{\rm div}\bm{u}+\bm{u}=\bm{f} \ \ &\text{in $\Omega,$}\\
         \ \ \ \ \ \bm{n}\cdot\bm{u}=0\ \ &\text{on $\partial\Omega.$}
     \end{cases}
\end{equation}
Its variational form is
\begin{equation}\notag 
    \begin{cases}
    \text{Given $\bm{f} \in \bm{L}^{2}(\Omega)$, find $\bm{u}\in \bm{H}_{0}({\rm div};\Omega)$ such that}\\
    \text{$a_{{\rm div}}(\bm{u},\bm{v})=(\bm{f},\bm{v})_{0}\ \ \ \ \forall\ \bm{v}\in \bm{H}_{0}({\rm div};\Omega)$},
    \end{cases}
\end{equation}
where 
\begin{align}\label{a-div}
a_{{\rm div}}(\bm{w},\bm{v})=\int_{\Omega}\big({\rm div}\bm{w}\ {\rm div}\bm{v}+\bm{w}\cdot\bm{v}\big)dx \ \ \ \ \forall\ \bm{w},\bm{v}\in \bm{H}({\rm div};\Omega). 
\end{align}
We consider $k$-th Raviart-Thomas finite element space
\begin{align*}
\mathcal{RT}_{h,0}:=\{\ \bm{v}\in \bm{H}_{0}({\rm div};\Omega)\ |\ \bm{v}|_{\tau}\in \mathcal{RT}_{k}(\tau)\ \ \forall\ \tau\in \mathcal{T}_{h}\},
\end{align*}
with $\mathcal{RT}_{k}(\tau)$ being the $k$-th order local Raviart-Thomas polynomial space on element $\tau$. It also admits the discrete Helmholtz decomposition:
\begin{align}\label{D-decompose-div}
\mathcal{RT}_{h,0}=\bm{{\rm curl}}\mathcal{ND}_{h,0}\oplus \mathcal{RT}_{h,0}^{\perp}=\bm{{\rm curl}}\mathcal{ND}_{h,0}^{\perp}\oplus \mathcal{RT}_{h,0}^{\perp},
\end{align}  
where 
\begin{align}\label{RT-perp}
\mathcal{RT}_{h,0}^{\perp}=\{\ \bm{w}_{h}\in \mathcal{RT}_{h,0}\ |\ (\bm{w}_{h},\bm{{\rm curl}}\bm{v}_{h})_{0}=0,\ \ \ \ \forall\ \bm{v}_{h}\in \mathcal{ND}_{h,0}\}.
\end{align}
\par As for the analysis in $\bm{H}(\bm{{\rm curl}};\Omega)$, we may define similarly an $\bm{L}^{2}$-orthogonal projector $\widetilde{\Theta}^{\perp}:\bm{H}_{0}({\rm div};\Omega)\to \bm{H}_{0}^{\perp}({\rm div};\Omega)$. Define $\widetilde{\bm{V}}^{+}:=\widetilde{\Theta}^{\perp}\mathcal{RT}_{h,0}^{\perp}\subset\bm{H}_{0}^{\perp}({\rm div};\Omega)$. Further, we define $\widetilde{P}_{h}:\bm{H}_{0}^{\perp}({\rm div};\Omega)\to \widetilde{\bm{V}}^{+}$ as
\begin{align}\label{P-h-div}
({\rm div}\widetilde{P}_{h}\bm{w},{\rm div}\bm{v})_{0}=({\rm div}\bm{w},{\rm div}\bm{v})_{0}\ \ \ \ \forall\ \bm{w}\in \bm{H}_{0}^{\perp}({\rm div};\Omega), \bm{v}\in \widetilde{\bm{V}}^{+}.
\end{align}
Since the Poincar\'e inequality holds in $\bm{H}_{0}^{\perp}({\rm div};\Omega)$, we know that $\widetilde{P}_{h}$ is well-defined. Moreover, we extend the operator $\widetilde{P}_{h}$ to $\bm{H}_{0}({\rm div};\Omega)$ by 
\begin{align}\label{P-h-div-1}
    \begin{aligned}
&\widetilde{P}_{h}{\bf curl}\bm{w}=\bm{0},\\
&\widetilde{P}_{h}\bm{v}=\widetilde{P}_{h}{\bf curl}\bm{w}+\widetilde{P}_{h}\bm{z}=\widetilde{P}_{h}\bm{z},
    \end{aligned}
\end{align}
where $\bm{v}={\bf curl}\bm{w}+\bm{z}\in{\bf curl}\bm{H}_{0}(\bm{{\rm curl}};\Omega)\oplus\bm{H}_{0}^{\perp}({\rm div};\Omega).$ 
Using the similar argument as in \eqref{P-T} and \eqref{P-e}, we get that for any $\bm{w}_{h}^{\perp}\in \mathcal{RT}_{h,0}^{\perp}$,
\begin{align}\label{P_T_div}
\widetilde{P}_{h}\bm{w}_{h}^{\perp}=\widetilde{\Theta}^{\perp} \bm{w}_{h}^{\perp},
\ \ \ \ \
{\rm div}\widetilde{P}_{h}\bm{w}_{h}^{\perp}={\rm div}\widetilde{\Theta}^{\perp}\bm{w}_{h}^{\perp}
={\rm div}\bm{w}_{h}^{\perp}.
\end{align}

\begin{lemma}\label{Lemma_Ph_div}
\cite{MR2104179} Let $\Omega$ be convex. Then
\begin{align*}
||\bm{w}_{h}^{\perp}-\widetilde{P}_{h}\bm{w}_{h}^{\perp}||_{0}\leq Ch||{\rm div}\bm{w}_{h}^{\perp}||_{0}\ \ \ \ \forall\ \bm{w}_{h}^{\perp}\in  \mathcal{RT}_{h,0}^{\perp},
\end{align*}
with the constant $C$ independent of $\bm{w}_{h}^{\perp}$ and $h$, where $\widetilde{P}_{h}$ is defined in \eqref{P-h-div}, \eqref{P-h-div-1} and $\mathcal{RT}_{h,0}^{\perp}$ is defined in \eqref{RT-perp}.
\end{lemma}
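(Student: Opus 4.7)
The plan is to mimic the $\bm H(\bm{\mathrm{curl}};\Omega)$ argument of Lemma \ref{Lemma_Ph} by a duality/Aubin--Nitsche trick, exploiting the H-curl interpolation $\Pi_{E,h}$ and the commuting diagram $\bm{\mathrm{curl}}\,\Pi_{E,h}=\Pi_{F,h}\bm{\mathrm{curl}}$, where $\Pi_{F,h}$ denotes the canonical Raviart--Thomas interpolation onto $\mathcal{RT}_{h,0}$. The key preliminary observation is that the error $\bm e:=\bm w_h^\perp-\widetilde P_h\bm w_h^\perp$ is divergence-free and has vanishing normal trace: indeed, by \eqref{P_T_div} we have $\mathrm{div}\,\widetilde P_h\bm w_h^\perp=\mathrm{div}\,\bm w_h^\perp$, and both $\bm w_h^\perp\in\mathcal{RT}_{h,0}$ and $\widetilde P_h\bm w_h^\perp\in\bm H_0^\perp(\mathrm{div};\Omega)$ have zero normal trace on $\partial\Omega$.

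Next, using that $\Omega$ is convex and simply connected, I would invoke the Helmholtz/de Rham decomposition to produce a potential $\bm\phi\in\bm H_0^\perp(\bm{\mathrm{curl}};\Omega)$ with $\bm{\mathrm{curl}}\,\bm\phi=\bm e$ and (by the Poincar\'e inequality in $\bm H_0^\perp(\bm{\mathrm{curl}};\Omega)$ together with the embedding \eqref{Align_Embedding}) $\|\bm\phi\|_1\le C\|\bm e\|_0$. Then I write
\begin{align*}
\|\bm e\|_0^2=(\bm e,\bm{\mathrm{curl}}\,\bm\phi)_0=(\bm e,\bm{\mathrm{curl}}\,\Pi_{E,h}\bm\phi)_0+(\bm e,\bm{\mathrm{curl}}(\bm\phi-\Pi_{E,h}\bm\phi))_0,
\end{align*}
and argue that the first inner product vanishes. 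For the $\bm w_h^\perp$ part this is the definition \eqref{RT-perp} of $\mathcal{RT}_{h,0}^\perp$ applied with $\Pi_{E,h}\bm\phi\in\mathcal{ND}_{h,0}$. For the $\widetilde P_h\bm w_h^\perp$ part I integrate by parts: since $\Pi_{E,h}\bm\phi$ inherits the zero tangential trace of $\bm\phi$, the boundary term vanishes, and since $\widetilde P_h\bm w_h^\perp\in\bm H(\bm{\mathrm{curl}}_0;\Omega)$ the volume term $(\bm{\mathrm{curl}}\,\widetilde P_h\bm w_h^\perp,\Pi_{E,h}\bm\phi)_0$ is zero as well.

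The remaining term is estimated by the commuting-diagram identity $\bm{\mathrm{curl}}\,\Pi_{E,h}\bm\phi=\Pi_{F,h}\bm{\mathrm{curl}}\,\bm\phi=\Pi_{F,h}\bm e$. Here the crucial cancellation occurs: since $\bm w_h^\perp\in\mathcal{RT}_{h,0}$ is preserved by $\Pi_{F,h}$,
\begin{align*}
\bm e-\Pi_{F,h}\bm e=\bigl(\bm w_h^\perp-\widetilde P_h\bm w_h^\perp\bigr)-\bigl(\bm w_h^\perp-\Pi_{F,h}\widetilde P_h\bm w_h^\perp\bigr)=-\bigl(\widetilde P_h\bm w_h^\perp-\Pi_{F,h}\widetilde P_h\bm w_h^\perp\bigr),
\end{align*}
so the non-smooth part cancels and only the $\bm H^1$-smooth piece $\widetilde P_h\bm w_h^\perp$ remains. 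The standard Raviart--Thomas approximation bound together with the embedding \eqref{div_embed} and the Poincar\'e inequality in $\bm H_0^\perp(\mathrm{div};\Omega)$ then yield
\begin{align*}
\|\bm{\mathrm{curl}}(\bm\phi-\Pi_{E,h}\bm\phi)\|_0=\|\widetilde P_h\bm w_h^\perp-\Pi_{F,h}\widetilde P_h\bm w_h^\perp\|_0\le Ch\,|\widetilde P_h\bm w_h^\perp|_1\le Ch\,\|\mathrm{div}\,\widetilde P_h\bm w_h^\perp\|_0=Ch\,\|\mathrm{div}\,\bm w_h^\perp\|_0,
\end{align*}
where the last equality is again \eqref{P_T_div}. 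Cauchy--Schwarz on $(\bm e,\bm{\mathrm{curl}}(\bm\phi-\Pi_{E,h}\bm\phi))_0$ and cancellation of one $\|\bm e\|_0$ finish the proof.

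The step I expect to require the most care is the construction of $\bm\phi$ with the correct tangential boundary condition in step~2 (so that $\Pi_{E,h}\bm\phi\in\mathcal{ND}_{h,0}$ and the boundary term in the integration by parts really vanishes); this uses the simple-connectedness and convexity of $\Omega$ in an essential way. Once that is in place, everything else is driven by the commuting diagram and the $\bm H^1$-regularity of $\widetilde P_h\bm w_h^\perp$.
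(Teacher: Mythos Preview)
The paper does not prove this lemma; it simply cites \cite{MR2104179} (the Toselli--Widlund monograph), where the analogous result appears as Lemma~10.11. Your Aubin--Nitsche argument with the vector potential $\bm\phi$ and the commuting diagram $\bm{\mathrm{curl}}\,\Pi_{E,h}=\Pi_{F,h}\,\bm{\mathrm{curl}}$ is precisely the standard proof strategy found in that reference, so you have correctly reconstructed what the citation points to.

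Two small remarks on execution. First, for $\Pi_{E,h}\bm\phi$ to be well-defined in three dimensions you need more than $\bm\phi\in\bm H^1$; the usual sufficient condition is $\bm\phi|_\tau\in\bm H^1(\tau)$ together with $\bm{\mathrm{curl}}\,\bm\phi|_\tau\in\bm L^p(\tau)$ for some $p>2$. Here $\bm{\mathrm{curl}}\,\bm\phi=\bm e=\bm w_h^\perp-\widetilde P_h\bm w_h^\perp$, and since $\bm w_h^\perp$ is elementwise polynomial and $\widetilde P_h\bm w_h^\perp\in\bm H^1(\Omega)\hookrightarrow\bm L^6(\Omega)$, this is satisfied; but it is worth stating explicitly. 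Second, the vanishing of $(\widetilde P_h\bm w_h^\perp,\bm{\mathrm{curl}}\,\Pi_{E,h}\bm\phi)_0$ follows more directly from the fact that $\bm H_0^\perp(\mathrm{div};\Omega)$ is by construction the $\bm L^2$-orthogonal complement of $\bm{\mathrm{curl}}\,\bm H_0(\bm{\mathrm{curl}};\Omega)$ inside $\bm H_0(\mathrm{div};\Omega)$, so no integration by parts is needed. With these clarifications your proof is complete and matches the cited source.
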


\begin{lemma}\label{Lemma_QH_div}
\cite{MR2104179} Let $\mathcal{T}_{H}$ be shape-regular and quasi-uniform and $\widetilde{Q}_{H}:\bm{L}^{2}(\Omega)\to \mathcal{RT}_{H,0}$ be a $\bm{L}^{2}$-orthogonal projector. Then for $\bm{u}\in \bm{H}^{1}(\Omega)$, we have
\begin{align}\label{Q-h-div}
    \begin{aligned}
    ||{\rm div}(\widetilde{Q}_{H}\bm{u})||_{0}&\leq C|\bm{u}|_{1},\\
    ||\bm{u}-\widetilde{Q}_{H}\bm{u}||_{0}&\leq CH|\bm{u}|_{1},
    \end{aligned}
\end{align}
with the constant $C$ independent of $\bm{u}$ and $H$.
\end{lemma}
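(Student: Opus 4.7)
The plan is to compare $\widetilde Q_H\bm u$ with a Raviart--Thomas quasi-interpolant $\widetilde\Pi_H\bm u\in \mathcal{RT}_{H,0}$, in direct analogy with the proof of Lemma \ref{Lemma_QH} for the edge element case. I would take $\widetilde\Pi_H$ to be a Scott--Zhang / Cl\'ement-style regularization of the canonical Raviart--Thomas interpolant which is well-defined on $\bm H^1(\Omega)$, which maps into $\mathcal{RT}_{H,0}$ (so boundary normal moments are set to zero), and which respects the commuting-diagram identity
\begin{align*}
   \|\bm u - \widetilde\Pi_H\bm u\|_0 \leq C H\,|\bm u|_1, \qquad
   {\rm div}(\widetilde\Pi_H\bm u) = Q^{(0)}_H\,{\rm div}\bm u,
\end{align*}
where $Q^{(0)}_H$ denotes the element-wise $\bm L^2$-projection onto discontinuous $P_k$ polynomials on $\mathcal{T}_H$.

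The second inequality of \eqref{Q-h-div} then follows at once from the $\bm L^2$-optimality of $\widetilde Q_H$:
\begin{align*}
   \|\bm u - \widetilde Q_H\bm u\|_0 \leq \|\bm u - \widetilde\Pi_H\bm u\|_0 \leq CH\,|\bm u|_1.
\end{align*}
For the first inequality I would split
\begin{align*}
   \|{\rm div}\widetilde Q_H\bm u\|_0 \leq \|{\rm div}(\widetilde Q_H\bm u - \widetilde\Pi_H\bm u)\|_0 + \|{\rm div}\widetilde\Pi_H\bm u\|_0,
\end{align*}
bound the first piece by the coarse-mesh inverse inequality combined with the $\bm L^2$ estimate just proved,
\begin{align*}
   \|{\rm div}(\widetilde Q_H\bm u - \widetilde\Pi_H\bm u)\|_0 \leq CH^{-1}\bigl(\|\bm u-\widetilde Q_H\bm u\|_0 + \|\bm u-\widetilde\Pi_H\bm u\|_0\bigr) \leq C\,|\bm u|_1,
\end{align*}
and bound the second piece by the commuting diagram together with the $\bm L^2$-boundedness of $Q^{(0)}_H$:
\begin{align*}
   \|{\rm div}\widetilde\Pi_H\bm u\|_0 = \|Q^{(0)}_H\,{\rm div}\bm u\|_0 \leq \|{\rm div}\bm u\|_0 \leq C\,|\bm u|_1.
\end{align*}

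The main obstacle is the construction and verification of $\widetilde\Pi_H$ on $\bm H^1(\Omega)$ while preserving the zero-normal-trace constraint of $\mathcal{RT}_{H,0}$: the nodal Raviart--Thomas interpolation is defined on $\bm H^1$ (face normal moments make sense via the $\bm H^{1/2}$ trace), but its boundary degrees of freedom are not automatically zero for a generic $\bm u\in \bm H^1(\Omega)$, so a Scott--Zhang-type averaging over interior neighbours of boundary faces is needed to coerce the interpolant into $\mathcal{RT}_{H,0}$ while keeping both the $O(H)$ approximation and the commuting-diagram identity intact. Once this regularized interpolant is available, everything else is the algebraic triangle/inverse-inequality calculation above, and the proof runs in exact parallel to that of Lemma \ref{Lemma_QH}.
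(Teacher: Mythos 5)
The paper offers no proof of this lemma at all --- it is quoted verbatim from Toselli--Widlund \cite{MR2104179} --- so the only question is whether your argument stands on its own. Its skeleton (compare $\widetilde Q_H\bm u$ with a commuting quasi-interpolant, use $\bm L^2$-optimality of the projection for the second bound, and a coarse inverse estimate plus the commuting identity for the first) is the standard route, and the algebraic steps you write are fine.

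The gap is the operator you postulate. A map $\widetilde\Pi_H:\bm H^1(\Omega)\to\mathcal{RT}_{H,0}$ cannot simultaneously satisfy $\|\bm u-\widetilde\Pi_H\bm u\|_0\le CH\,|\bm u|_1$ and ${\rm div}\,\widetilde\Pi_H\bm u=Q^{(0)}_H\,{\rm div}\,\bm u$ for all $\bm u\in\bm H^1(\Omega)$: integrating the commuting identity over $\Omega$ and using the divergence theorem, the left-hand side vanishes for every element of $\mathcal{RT}_{H,0}$ (zero normal trace), while the right-hand side equals $\int_{\partial\Omega}\bm u\cdot\bm n\,ds$, which is nonzero for, e.g., a constant field. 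The same constant field already kills the approximation property on its own, since $|\bm u|_1=0$ would force $\widetilde\Pi_H\bm u=\bm u\notin\mathcal{RT}_{H,0}$; no Scott--Zhang-type averaging near the boundary can repair this, so the ``main obstacle'' you flag is an impossibility, not a technicality --- and the same counterexample shows that \eqref{Q-h-div} itself cannot hold for arbitrary $\bm u\in\bm H^1(\Omega)$ when the coarse space carries the essential boundary condition. The repair is to prove the lemma in the setting in which it is actually used in the paper, namely $\bm u=\widetilde P_h\bm z_h^\perp\in\bm H_0^\perp({\rm div};\Omega)\subset\bm H^1(\Omega)$, so that $\bm u\cdot\bm n=0$ on $\partial\Omega$ (or, alternatively, state it for the coarse Raviart--Thomas space without boundary conditions). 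Under that hypothesis the canonical Raviart--Thomas interpolant --- which is well defined and $O(H)$-accurate on $\bm H^1$ --- has vanishing boundary face moments automatically, hence maps into $\mathcal{RT}_{H,0}$ and satisfies the commuting identity exactly; with that operator in place, your $\bm L^2$-optimality step and the inverse-estimate step go through verbatim and yield both inequalities of \eqref{Q-h-div}.
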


\par Similar to the overlapping Schwarz method in $\bm{H}(\bm{{\rm curl}};\Omega)$ in Section 3, we have the notations $\Omega_{i}^{'},\ \Omega_{i},\ \Omega_{i}^{0}$ and $\Omega_{i,\delta}$. We denote by $\bm{W}_{i}:=\mathcal{RT}_{h,0}\cap \bm{H}_{0}({\rm div};\Omega_{i}^{'}) \ (i=1,2,...,N)$ local subspaces. As for the analysis in $\bm{H}(\bm{{\rm curl}};\Omega)$, we only need to prove following Theorem \ref{Theorem_Stable_Decomposition_divdiv}.

For any $\bm{u}_{h}\in \mathcal{RT}_{h,0}$, we decompose it as 
    $\bm{u}_{h}=  \sum_{i=0}^N \bm{u}_{i} $, where
\begin{align}\label{d-u-div} 
    \begin{aligned}
    \bm{u}_{0} &:={\bf curl}\bm{w}_{0}+\bm{z}_{0}\in \mathcal{RT}_{H,0},\\ 
    \bm{u}_{i} &:={\bf curl}\bm{w}_{i}+\bm{z}_{i}\in \bm{W}_{i},\ \ \ \ i=1,2,...,N,\\
    \bm{w}_{0} &= Q_H P_h \bm w_h^\perp, \\
    \bm{w}_{i} &= \Pi_{E,h} (\theta_i (\bm w_h^\perp - \bm w_0)),  \\
    \bm{z}_{0} &= \widetilde{Q}_{H}\widetilde{P}_{h}\bm{z}_{h}^{\perp},\\
    \bm{z}_{i} &= \Pi_{F,h}(\theta_{i}(\bm{z}_{h}^{\perp}-\bm{z}_{0})),\\
    \bm{u}_{h} &= {\bf curl}\bm{w}_{h}^{\perp}+\bm{z}_{h}^{\perp},
    \end{aligned} 
\end{align} 
where $\bm w_h^\perp$ and $\bm{z}_{h}^{\perp}$ are defined in \eqref{D-decompose-div},
      $\Pi_{F,h}$ is the H-div interpolation operator to $\mathcal{RT}_{h,0}$,
      $\theta_i$ is defined in \eqref{theta}, 
      $\widetilde{P}_h$ is defined in \eqref{P-h-div},
      $\widetilde{Q}_H$ is defined in \eqref{Q-h-div},
      $\Pi_{E,h}$ is the H-curl interpolation operator to $\mathcal{ND}_{h,0}$,
      $P_h$ is defined in \eqref{P-h} and
      $Q_H$ is defined in \eqref{Q-h}.

\begin{theorem}\label{Theorem_Stable_Decomposition_divdiv}
Let $\bm{u}_{h}\in \mathcal{RT}_{h,0}$ be decomposed in \eqref{d-u-div}.
It holds that 
\begin{align}\label{upper-div}
  \sum_{i=0}^{N}a_{{\rm div}}(\bm{u}_{i},\bm{u}_{i})
\leq C(1+\frac{H}{\delta})a_{{\rm div}}(\bm{u}_{h},\bm{u}_{h}). 
\end{align}
\end{theorem}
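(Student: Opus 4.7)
The plan is to mirror the proof of Theorem \ref{Theorem_Stable_Decomposition} step by step, substituting $\bm{H}(\bm{{\rm curl}};\Omega)$-tools by their $\bm{H}({\rm div};\Omega)$-analogs. First observe that ${\rm div}(\bm{{\rm curl}}\bm{w}_i)=0$, so
\begin{align*}
a_{{\rm div}}(\bm{u}_i,\bm{u}_i)=||{\rm div}\bm{z}_i||_{0,\Omega_i'}^2+||\bm{{\rm curl}}\bm{w}_i+\bm{z}_i||_{0,\Omega_i'}^2\leq ||{\rm div}\bm{z}_i||_{0,\Omega_i'}^2+2||\bm{{\rm curl}}\bm{w}_i||_{0,\Omega_i'}^2+2||\bm{z}_i||_{0,\Omega_i'}^2.
\end{align*}
Because the Helmholtz decomposition \eqref{D-decompose-div} is $\bm{L}^2$- and $a_{{\rm div}}$-orthogonal, one has $a_{{\rm div}}(\bm{u}_h,\bm{u}_h)\geq ||\bm{{\rm curl}}\bm{w}_h^\perp||_0^2+||{\rm div}\bm{z}_h^\perp||_0^2+||\bm{z}_h^\perp||_0^2$, so it suffices to control each of the three sums on the right-hand side by $C(1+H/\delta)$ times this quantity.

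The $\bm{{\rm curl}}\bm{w}_i$ sum comes for free: since the $\bm{w}_i$ in \eqref{d-u-div} are defined verbatim as in \eqref{d-u}, the inequalities \eqref{Align_Coarse_Component_a1} and \eqref{Align_Local_Estimates_curl} from Theorem \ref{Theorem_Stable_Decomposition} apply without change and yield $||\bm{{\rm curl}}\bm{w}_0||_0^2+\sum_{i=1}^N||\bm{{\rm curl}}\bm{w}_i||_{0,\Omega_i'}^2\leq C(1+H/\delta)||\bm{{\rm curl}}\bm{w}_h^\perp||_0^2$. For the $\bm{z}_i$ contributions I would re-run the four-step argument used for the $\bm{w}_i$ in Theorem \ref{Theorem_Stable_Decomposition}, under the substitutions $\bm{{\rm curl}}\leftrightarrow {\rm div}$, $P_h\leftrightarrow \widetilde{P}_h$, $Q_H\leftrightarrow \widetilde{Q}_H$, $\Pi_{E,h}\leftrightarrow \Pi_{F,h}$, invoking Lemma \ref{Lemma_Ph_div}, Lemma \ref{Lemma_QH_div} and the embedding \eqref{div_embed} in place of their H-curl counterparts. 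The coarse bounds $||{\rm div}\bm{z}_0||_0\leq C||{\rm div}\bm{z}_h^\perp||_0$ and $||\bm{z}_0||_0\leq ||\bm{z}_h^\perp||_0$ follow verbatim from the computations \eqref{Align_Coarse_Component_a1}--\eqref{Align_Coarse_Component_a2} using \eqref{Q-h-div} and \eqref{P_T_div}. For the local div-terms, an $\bm{H}({\rm div})$ analog of Lemma 10.8 in \cite{MR2104179} should give
\begin{align*}
\sum_{i=1}^N ||{\rm div}\bm{z}_i||_{0,\Omega_i'}^2\leq C\delta^{-2}\sum_{i=1}^N||\bm{z}_h^\perp-\bm{z}_0||_{0,\Omega_{i,\delta}}^2+C||{\rm div}(\bm{z}_h^\perp-\bm{z}_0)||_0^2.
\end{align*}
Splitting $\bm{z}_h^\perp-\bm{z}_0=\widetilde{\bm{v}}+(\bm{z}_h^\perp-\widetilde{P}_h\bm{z}_h^\perp)$ with $\widetilde{\bm{v}}=\widetilde{P}_h\bm{z}_h^\perp-\widetilde{Q}_H\widetilde{P}_h\bm{z}_h^\perp$ reduces the estimate to two pieces: $\widetilde{\bm{v}}$ is piecewise $\bm{H}^1$ on the coarse mesh thanks to \eqref{div_embed}, so Lemma \ref{Lemma_Inequality_of_InnerStri} combined with \eqref{Q-h-div} and \eqref{Align_Q0i_Approximation} supplies the $(1+H/\delta)$ factor; the interpolation residual $\bm{z}_h^\perp-\widetilde{P}_h\bm{z}_h^\perp$ is absorbed by Lemma \ref{Lemma_Ph_div}, whose $O(h)$ bound kills the $\delta^{-2}$. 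The $\bm{L}^2$-sum $\sum_{i=1}^N||\bm{z}_i||_{0,\Omega_i'}^2$ is closed as in \eqref{Align_Local_Estimates_L2} by $|\theta_i|\leq 1$, the $\bm{L}^2$-stability of $\Pi_{F,h}$, finite overlapping, and the coarse bound on $\bm{z}_0$.

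The main obstacle is establishing the $\bm{H}({\rm div})$-analog of Lemma 10.8 in \cite{MR2104179} on which the whole local estimate rests: one must control both $\Pi_{F,h}(\theta_i\bm{v})$ in $\bm{L}^2$ and ${\rm div}\,\Pi_{F,h}(\theta_i\bm{v})$, starting from the identity ${\rm div}(\theta_i\bm{v})=\nabla\theta_i\cdot\bm{v}+\theta_i\,{\rm div}\bm{v}$, the bound $||\nabla\theta_i||_{0,\infty,\Omega_{i,\delta}}\leq C/\delta$, and the commuting-diagram property ${\rm div}\,\Pi_{F,h}=\Pi_{Q,h}\,{\rm div}$ of the Raviart-Thomas interpolant, while making sure the interpolation does not spoil the $h$-factor of Lemma \ref{Lemma_Ph_div}. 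Once this is in hand, the remainder is a mechanical translation of Theorem \ref{Theorem_Stable_Decomposition}.
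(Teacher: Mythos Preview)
Your proposal is correct and follows essentially the same route as the paper: treat the $\bm{{\rm curl}}\bm w_i$ part by quoting the estimates already proved in Theorem~\ref{Theorem_Stable_Decomposition}, and treat the $\bm z_i$ part by re-running the $I_1,I_2,I_3$ argument with $\widetilde P_h,\widetilde Q_H,\Pi_{F,h}$ in place of $P_h,Q_H,\Pi_{E,h}$, invoking Lemmas~\ref{Lemma_Ph_div}, \ref{Lemma_QH_div}, \ref{Lemma_Inequality_of_InnerStri} and the embedding \eqref{div_embed}. The one item you flag as the ``main obstacle''---the $\bm H({\rm div})$ analog of Lemma~10.8 in \cite{MR2104179}---is not an obstacle at all: it is Lemma~10.13 in the same reference, which the paper simply cites, so no new interpolation estimate needs to be derived.
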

\begin{proof}
By the discrete Helmholtz decomposition in the last equation in \eqref{d-u-div},
we decompose $\bm{u}_{h}$ in \eqref{d-u-div} as 
\begin{align*} 
\bm{u}_{h}={\bf curl}\bm{w}_{0}+\bm{z}_{0}+\sum_{i=1}^N ({\bf curl}\bm{w}_{i}+\bm{z}_{i}).
\end{align*}

\par For the terms $\bm{{\rm curl}}\bm{w}_{i}$ of $\bm u_h$, by Section 3 in this paper, we have $\bm{w}_{0}\in \mathcal{ND}_{H,0}$ and $\bm{w}_{i}\in \bm{V}_{i}$, which satisfy 
\begin{align}\label{Align_Stable_Decomposition_kerdiv}
\sum_{i=0}^{N}a_{{\rm div}}(\bm{{\rm curl}}\bm{w}_{i},\bm{{\rm curl}}\bm{w}_{i})
\leq C(1+\frac{H}{\delta})a_{{\rm div}}(\bm{{\rm curl}}\bm{w}_{h}^{\perp},\bm{{\rm curl}}\bm{w}_{h}^{\perp}). 
\end{align}

For the terms $\bm{z}_{i}$,  we will prove the following by the steps (1) and (2) below. 
\begin{align}\label{Align_Stable_Decomposition_perp_div}
\sum_{i=0}^{N}a_{{\rm div}}(\bm{z}_{i},\bm{z}_{i})
\leq C(1+\frac{H}{\delta})a_{{\rm div}}(\bm{z}_{h}^{\perp},\bm{z}_{h}^{\perp}).
\end{align}

\begin{itemize}
\item[(1)] For the coarse component $\bm{z}_{0}$, by \eqref{d-u-div}, \eqref{Q-h-div}, \eqref{div_embed} and \eqref{P_T_div}, we have
\begin{align*}
||{\rm div}\bm{z}_{0}||_{0}
=||{\rm div}\widetilde{Q}_{H}\widetilde{P}_{h}\bm{z}_{h}^{\perp}||_{0}
\leq C|\widetilde{P}_{h}\bm{z}_{h}^{\perp}|_{1}
\leq C||{\rm div}\widetilde{P}_{h}\bm{z}_{h}^{\perp}||_{0}
=C||{\rm div}\bm{z}_{h}^{\perp}||_{0}.
\end{align*}
By \eqref{d-u-div}, \eqref{Q-h-div}, \eqref{P_T_div} and the definition $\widetilde{\Theta}^{\perp}$, we get
\begin{align*}
||\bm{z}_{0}||_{0}
=||\widetilde{Q}_{H}\widetilde{P}_{h}\bm{z}_{h}^{\perp}||_{0}
\leq ||\widetilde{P}_{h}\bm{z}_{h}^{\perp}||_{0}
=||\widetilde{\Theta}^{\perp}\bm{z}_{h}^{\perp}||_{0}
\leq ||\bm{z}_{h}^{\perp}||_{0}.
\end{align*}
The above two inequalities imply that, by the definition of $a_{{\rm div}}(\cdot,\cdot)$ in \eqref{a-div},
\begin{align}\label{Align_Coarse_Component_a_div}
a_{{\rm div}}(\bm{z}_{0},\bm{z}_{0})\leq Ca_{{\rm div}}(\bm{z}_{h}^{\perp},\bm{z}_{h}^{\perp}).
\end{align}
\item[(2)] For the local components $\{\bm{z}_{i}\}_{i=1}^{N}$, by the properties of $\{\theta_{i}\}_{i=1}^{N}$ and $\Pi_{F,h}$ (see Lemma 10.13 in \cite{MR2104179}), we have
    \begin{align*}
    \sum_{i=1}^{N}||{\rm div}\bm{z}_{i}||_{0,\Omega_{i}^{'}}^{2}\leq C\sum_{i=1}^{N}\delta ^{-2}||\bm{z}||_{0,\Omega_{i,\delta }}^{2}+C||{\rm div}\bm{z}||_{0}^{2},
    \end{align*}
     where 
    \begin{align}\notag
    \bm{z}=\bm{z}_{h}^{\perp}-\bm{z}_{0}=\bm{z}_{h}^{\perp}-\widetilde{Q}_{H}\widetilde{P}_{h}\bm{z}_{h}^{\perp}.
    \end{align}
     Denoting by $\widetilde{\bm{z}}:=\widetilde{P}_{h}\bm{z}_{h}^{\perp}-\widetilde{Q}_{H}\widetilde{P}_{h}\bm{z}_{h}^{\perp}$ and using the triangle inequality, we obtain
    \begin{align}\label{Align_Local_Estimates_J1J2J3}
        \begin{aligned}
        \sum_{i=1}^{N}||{\rm div}\bm{z}_{i}||_{0,\Omega_{i}^{'}}^{2}
        &\leq C\sum_{i=1}^{N}\delta ^{-2}||\widetilde{\bm{z}}||_{0,\Omega_{i,\delta }}^{2}
             +C\sum_{i=1}^{N}\delta ^{-2}||\widetilde{\bm{z}}-\bm{z}||_{0,\Omega_{i,\delta }}^{2}
             +C||{\rm div}\bm{z}||_{0}^{2}\\
        &:=J_{1}+J_{2}+J_{3}.
        \end{aligned}
    \end{align}
    For the first term $J_{1}$ in \eqref{Align_Local_Estimates_J1J2J3}, by Lemma \ref{Lemma_Inequality_of_InnerStri} and the fact that $\widetilde{\bm{z}}\in \bm{H}^{1}(\Omega_{i})$, we get, because of finite overlapping,
    \begin{align}\label{Align_widetildebmz_estimate3}
    J_{1}=C\sum_{i=1}^{N}\delta ^{-2}||\widetilde{\bm{z}}||_{0,\Omega_{i,\delta }}^{2}
    &\leq C(1+\frac{H}{\delta})\sum_{i=1}^{N}|\widetilde{\bm{z}}|^{2}_{1,\Omega_{i}}+ 
    C\frac{1}{\delta H}||\widetilde{\bm{z}}||^{2}_{0,\Omega}. 
    \end{align}
    For the first term in \eqref{Align_widetildebmz_estimate3}, by the triangle inequality, inverse estimate and \eqref{Align_Q0i_Approximation}, we obtain
\begin{align*}
|\widetilde{\bm{z}}|_{1,\Omega_{i}}&=|\widetilde{P}_{h}\bm{z}_{h}^{\perp}-\widetilde{Q}_{H}\widetilde{P}_{h}\bm{z}_{h}^{\perp}|_{1,\Omega_{i}}
\leq |\widetilde{P}_{h}\bm{z}_{h}^{\perp}|_{1,\Omega_{i}}
+|\widetilde{Q}_{H}\widetilde{P}_{h}\bm{z}_{h}^{\perp}-Q_{0,\Omega_{i}}\widetilde{P}_{h}\bm{z}_{h}^{\perp}|_{1,\Omega_{i}}\\
&\leq |\widetilde{P}_{h}\bm{z}_{h}^{\perp}|_{1,\Omega_{i}}+
CH^{-1}||\widetilde{Q}_{H}\widetilde{P}_{h}\bm{z}_{h}^{\perp}-Q_{0,\Omega_{i}}\widetilde{P}_{h}\bm{z}_{h}^{\perp}||_{0,\Omega_{i}}\\
&\leq |\widetilde{P}_{h}\bm{z}_{h}^{\perp}|_{1,\Omega_{i}}
+CH^{-1}||\widetilde{Q}_{H}\widetilde{P}_{h}\bm{z}_{h}^{\perp}-\widetilde{P}_{h}\bm{z}_{h}^{\perp}||_{0,\Omega_{i}}\\
&\ \ \ \ +CH^{-1}||\widetilde{P}_{h}\bm{z}_{h}^{\perp}-Q_{0,\Omega_{i}}\widetilde{P}_{h}\bm{z}_{h}^{\perp}||_{0,\Omega_{i}}\\
&\leq |\widetilde{P}_{h}\bm{z}_{h}^{\perp}|_{1,\Omega_{i}}
+CH^{-1}||\widetilde{Q}_{H}\widetilde{P}_{h}\bm{z}_{h}^{\perp}-\widetilde{P}_{h}\bm{z}_{h}^{\perp}||_{0,\Omega_{i}}
+C|\widetilde{P}_{h}\bm{z}_{h}^{\perp}|_{1,\Omega_{i}},
\end{align*}
which, together with Lemma \ref{Lemma_QH_div}, \eqref{div_embed} and \eqref{P_T_div}, yields
\begin{align}\label{Align_widetildebmz_estimate4}
    \begin{aligned}
    \sum_{i=1}^{N}|\widetilde{\bm{z}}|^{2}_{1,\Omega_{i}}
    &\leq C\sum_{i=1}^{N}|\widetilde{P}_{h}\bm{z}_{h}^{\perp}|^{2}_{1,\Omega_{i}}+
    CH^{-2}\sum_{i=1}^{N}||\widetilde{P}_{h}\bm{z}_{h}^{\perp}-\widetilde{Q}_{H}\widetilde{P}_{h}\bm{z}_{h}^{\perp}||^{2}_{0,\Omega_{i}}\\
    &\leq C|\widetilde{P}_{h}\bm{z}_{h}^{\perp}|^{2}_{1}+
    CH^{-2}||\widetilde{P}_{h}\bm{z}_{h}^{\perp}-\widetilde{Q}_{H}\widetilde{P}_{h}\bm{z}_{h}^{\perp}||_{0}^{2}\\
    &\leq C|\widetilde{P}_{h}\bm{z}_{h}^{\perp}|^{2}_{1}
    \leq C||{\rm div}\widetilde{P}_{h}\bm{z}_{h}^{\perp}||^{2}_{0}
    = C||{\rm div}\bm{z}_{h}^{\perp}||^{2}_{0}. 
\end{aligned}
\end{align}
For the second term in \eqref{Align_widetildebmz_estimate3}, by Lemma \ref{Lemma_QH_div}, \eqref{div_embed} and \eqref{P_T_div}, we get
\begin{align*}
\frac{C}{\delta H}||\widetilde{\bm{z}}||_{0}^{2}
&=\frac{C}{\delta H}||\widetilde{P}_{h}\bm{z}_{h}^{\perp}-\widetilde{Q}_{H}\widetilde{P}_{h}\bm{z}_{h}^{\perp}||_{0}^{2}\\
&\leq \frac{C}{\delta H}CH^{2}|\widetilde{P}_{h}\bm{z}_{h}^{\perp}|^{2}_{1}
\leq C\frac{H}{\delta}||{\rm div}\widetilde{P}_{h}\bm{z}_{h}^{\perp}||^{2}_{0}
= C\frac{H}{\delta}||{\rm div}\bm{z}_{h}^{\perp}||^{2}_{0},
\end{align*}
which, together with \eqref{Align_widetildebmz_estimate3} and \eqref{Align_widetildebmz_estimate4}, yields
\begin{align*}
J_{1}=C\sum_{i=1}^{N}\delta ^{-2}||\widetilde{\bm{z}}||_{0,\Omega_{i,\delta }}^{2}
\leq C(1+\frac{H}{\delta})||{\rm div}\bm{z}_{h}^{\perp}||_{0}^{2}.
\end{align*}

For the second term $J_{2}$ in \eqref{Align_Local_Estimates_J1J2J3}, by Lemma \ref{Lemma_Ph_div}, we deduce
\begin{align*}
J_{2}
&=C\sum_{i=1}^{N}\delta ^{-2}||\widetilde{\bm{z}}-\bm{z}||_{0,\Omega_{i,\delta }}^{2}
=C\delta^{-2}\sum_{i=1}^{N}||\bm{z}_{h}^{\perp}-\widetilde{P}_{h}\bm{z}_{h}^{\perp}||_{0,\Omega_{i,\delta }}^{2}\\
&\leq C\delta^{-2}||\bm{z}_{h}^{\perp}-\widetilde{P}_{h}\bm{z}_{h}^{\perp}||_{0}^{2}
\leq Ch^{2}\delta^{-2}||{\rm div}\bm{z}_{h}^{\perp}||_{0}^{2}\\
&\leq C||{\rm div}\bm{z}_{h}^{\perp}||_{0}^{2}.
\end{align*}
For the third term $J_{3}$ in \eqref{Align_Local_Estimates_J1J2J3}, we have, because of \eqref{Align_Coarse_Component_a_div},
\begin{align*}
J_{3}=C||{\rm div}\bm{z}||_{0}^{2}\leq 2\{||{\rm div}\bm{z}_{h}^{\perp}||_{0}^{2}+||{\rm div}\bm{z}_{0}||_{0}^{2}\}
\leq C||{\rm div}\bm{z}_{h}^{\perp}||_{0}^{2}.
\end{align*}
By using \eqref{Align_Local_Estimates_J1J2J3} and the estimates of three terms $J_{1},\ J_{2}$ and $J_{3}$, we obtain
\begin{align}\label{Align_Local_Estimates_div}
\sum_{i=1}^{N}||{\rm div}\bm{z}_{i}||_{0,\Omega_{i}^{'}}^{2}\leq J_{1}+J_{2}+J_{3}\leq C(1+\frac{H}{\delta})||{\rm div}\bm{z}_{h}^{\perp}||_{0}^{2}.
\end{align}
\par For the $\bm{L}^{2}$-norm estimate, we have, by \eqref{d-u-div}, the fact $|\theta_{i}|\leq 1$, finite overlapping and \eqref{Align_Coarse_Component_a_div},
\begin{align}\label{Align_Local_Estimates_L2_div}
    \begin{aligned}
    \sum_{i=1}^{N}||\bm{z}_{i}||^{2}_{0,\Omega_{i}^{'}}
    &\leq C\sum_{i=1}^{N}||\theta_{i}(\bm{z}_{h}^{\perp}-\bm{z}_{0})||^{2}_{0,\Omega_{i}^{'}}
    \leq C\sum_{i=1}^{N}\{||\bm{z}_{h}^{\perp}||^{2}_{0,\Omega_{i}^{'}}+||\bm{z}_{0}||^{2}_{0,\Omega_{i}^{'}}\}\\
    &\leq C||\bm{z}_{h}^{\perp}||_{0}^{2}+C||\bm{z}_{0}||_{0}^{2}
    \leq C||\bm{z}_{h}^{\perp}||_{0}^{2}.
    \end{aligned}
\end{align}
\end{itemize}
Combining \eqref{Align_Coarse_Component_a_div}, \eqref{Align_Local_Estimates_div} and \eqref{Align_Local_Estimates_L2_div}, we completes the proof of \eqref{Align_Stable_Decomposition_perp_div}. Finally, by \eqref{d-u-div}, \eqref{Align_Stable_Decomposition_kerdiv} and \eqref{Align_Stable_Decomposition_perp_div}, we get \eqref{upper-div},  noting that 
the decomposition \eqref{D-decompose-div} is also orthogonal under $a_{{\rm div}}(\cdot,\cdot)$. \qed
\end{proof}

\section{Conclusions}

\par In this paper, we prove that the estimates of the condition numbers of the overlapping Schwarz methods in $\bm{H}(\bm{{\rm curl}};\Omega)$ and $\bm{H}({\rm div};\Omega)$ are bounded by $C\left(1+\frac{H}{\delta}\right)$, which is similar as the case in $H^{1}(\Omega)$. We emphasize that the previous bound is $C\left(1+\frac{H^{2}}{\delta^{2}}\right)$. So we close this open problem for overlapping Schwarz methods in $\bm{H}(\bm{{\rm curl}};\Omega)$ and $\bm{H}({\rm div};\Omega)$.

\begin{small}
\bibliographystyle{plain}
\bibliography{reference}
\end{small}

\end{document}